\newtheorem{theorem}{Theorem}[section]
\newtheorem{proposition}{Proposition}
\newtheorem{lemma}[proposition]{Lemma}
\newtheorem{corollary}[proposition]{Corollary}
\theoremstyle{remark}
\newtheorem{remark}[proposition]{Remark}
\theoremstyle{definition}
\numberwithin{equation}{section}
\numberwithin{proposition}{section}
\newcommand{\NN}{\mathbb{N}}
\newcommand{\ZZ}{\mathbb{Z}}
\newcommand{\RR}{\mathbb{R}}
\newcommand{\e}{\varepsilon}
\newcommand{\al}{\alpha}
\newcommand{\ga}{\gamma}
\newcommand{\de}{\delta}
\newcommand{\la}{\lambda}
\newcommand{\sig}{\sigma}
\DeclareMathOperator{\tr}{tr}
\DeclareMathOperator{\cov}{Cov}
\DeclareMathOperator{\normal}{\mathcal{N}}
\newcommand{\EE}{\mathbb{E}}
\DeclareDocumentCommand{\E}{o m}  
{%
	\mathbb{E}\IfValueT{#1}{_{#1}}\left[#2\right]
}
\DeclareDocumentCommand{\P}{o m}  
{%
	\mathbb{P}\IfValueT{#1}{_{#1}}\left[#2\right]
}
\newcommand{\tmesh}{{n^{-1}}}
\newcommand{\xmesh}{{n^{-1/2}}}
\renewcommand{\tilde}{\widetilde}
\newcommand{\norm}[1]{\|#1\|}
\DeclareFontFamily{OMX}{MnSymbolE}{}
\DeclareSymbolFont{MnLargeSymbols}{OMX}{MnSymbolE}{m}{n}
\DeclareFontShape{OMX}{MnSymbolE}{m}{n}{
	<-6>  MnSymbolE5
	<6-7>  MnSymbolE6
	<7-8>  MnSymbolE7
	<8-9>  MnSymbolE8
	<9-10> MnSymbolE9
	<10-12> MnSymbolE10
	<12->   MnSymbolE12
}{}
\DeclareFontShape{OMX}{MnSymbolE}{b}{n}{
	<-6>  MnSymbolE-Bold5
	<6-7>  MnSymbolE-Bold6
	<7-8>  MnSymbolE-Bold7
	<8-9>  MnSymbolE-Bold8
	<9-10> MnSymbolE-Bold9
	<10-12> MnSymbolE-Bold10
	<12->   MnSymbolE-Bold12
}{}
\let\llangle\@undefined
\let\rrangle\@undefined
\DeclareMathDelimiter{\llangle}{\mathopen}%
{MnLargeSymbols}{'164}{MnLargeSymbols}{'164}
\DeclareMathDelimiter{\rrangle}{\mathclose}%
{MnLargeSymbols}{'171}{MnLargeSymbols}{'171}
\newcounter{gscan}
\newcounter{bwscan}
\newcounter{cscan}
\newcounter{hscan}
\newcounter{fscan}
\newcounter{pscan}
\newcounter{sscan}
\newcounter{iscan}
\newcounter{rscan}
\newcounter{rrscan}
\newcounter{fpscan}
\newcommand{\rst}[1]{\ensuremath{{\mathbin\upharpoonright}%
		\raise-.5ex\hbox{$#1$}}}
\newcommand{\eps}{\ensuremath{\varepsilon}}
\newcommand{\eqdist}{\ensuremath{\stackrel{\mathrm{d}}{=}}}
\newcommand{\R}{\ensuremath{\mathbb{R}}}
\newcommand{\llb}{\ensuremath{\llbracket}}
\newcommand{\rrb}{\ensuremath{\rrbracket}}
\newcommand{\mfrac}[2]{\ensuremath{\mbox{\large $\tfrac{#1}{#2}$}}}
\newcommand{\iid}{\textsc{iid}}
\begin{document}

\title{The central limit theorem via doubling of variables}

\begin{abstract}
We give a new, self-contained proof of the multidimensional central limit theorem using the technique of ``doubling variables," which is traditionally used to prove uniqueness of solutions of partial differential equations (PDEs).
Our technique also yields quantitative bounds for random variables with finite $2+\ga$ moment for some $\gamma \in (0,1]$; when $\gamma=1$, this proves a version of the Berry--Esseen theorem in $\R^d$.
\end{abstract}

\author[L. Addario-Berry]{Louigi Addario-Berry}
\author[G. Barill]{Gavin Barill}
\author[E. Beckman]{Erin Beckman}
\author[J. Lin]{Jessica Lin}


\keywords{Central limit theorem, Berry-Esseen theorem, heat equation}
\subjclass[2010]{60F05, 35K05}
\date{\today}

\maketitle
\section{Introduction}\label{s.intro}
The central limit theorem, under its moniker of ``the bell curve,'' is perhaps the best-known mathematical result outside of the mathematical sciences. The purpose of this paper is to provide a new, self-contained proof of the central limit theorem in $\R^d$ via {\em doubling of variables}, a key technique for establishing uniqueness of solutions of first- and second-order PDEs \cite{MR0267257, users}. 
Some of the authors of this article have recently used related techniques from the analysis of PDEs to prove distributional convergence for several random processes \cite{MR4146542,MR4391736,https://doi.org/10.48550/arxiv.2204.03689}. We believe that this family of methods will be broadly useful in probabilistic settings, and the central limit theorem seems to us a useful proving ground.

Throughout the paper we let $(X_i,i \ge 1)$ be \iid, centered, $\R^d$-valued random variables with non-degenerate, finite covariance matrix $\Sigma$, defined on a common probability space $(\Omega,\mathcal{F},\mathbb{P})$, and for integers $n \ge 1$ we set $S_n=X_1+\ldots+X_n$. 

We prove the central limit theorem in the following form.

\begin{theorem}\label{t.clt}
For all bounded, uniformly continuous $f:\R^d\to \R$, 
\begin{equation}\label{eq:convergence_of_test}
\E{f\left(\frac{S_n}{n^{1/2}}\right)} \to \E{f(\xi)}\, 
\end{equation}
as $n \to \infty$, where $\xi\sim  \normal(0,\Sigma)$. 
\end{theorem}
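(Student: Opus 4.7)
My strategy is to compare two evolutions: the Gaussian heat semigroup
\[
u(t, x) := \E{f(x + \sqrt{t}\,\xi)}, \qquad \xi \sim \normal(0, \Sigma),
\]
and the random-walk semigroup $v_n(k, x) := \E{f(x + S_k/\sqrt{n})}$. Both agree at time zero, $u(0,\cdot) = v_n(0,\cdot) = f$, so Theorem~\ref{t.clt} amounts to showing $v_n(n, 0) - u(1, 0) \to 0$. The function $u$ is smooth on $(0,\infty)\times \R^d$ and solves $\partial_t u = \tfrac{1}{2}\tr(\Sigma D^2 u)$, while the discrete side satisfies the recursion $v_n(k+1, x) = \E{v_n(k, x + X_{k+1}/\sqrt{n})}$. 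A standard mollification of $f$, combined with tightness of $(S_n/\sqrt{n})_{n\geq 1}$ (from Chebyshev and $\E{|X_1|^2}<\infty$), reduces matters to $f \in C_b^\infty(\R^d)$.

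The doubling-of-variables step introduces, for a penalty parameter $\epsilon > 0$, the auxiliary functional
\[
\Psi_{n,\epsilon}(k, x, y) := v_n(k, x) - u(k/n, y) - \frac{|x-y|^2}{2\epsilon},
\]
and its supremum $M_k := \sup_{x,y\in\R^d}\Psi_{n,\epsilon}(k,x,y)$. By uniform continuity of $f$, $M_0 = \sup_{x,y}[f(x) - f(y) - |x-y|^2/(2\epsilon)]$ tends to zero as $\epsilon \to 0$. The main task is to control the growth of $M_k$ in $k$: at a near-maximizer $(x^*, y^*)$, the first-order conditions pin $\nabla v_n(k, x^*) = \nabla u(k/n, y^*) = (x^* - y^*)/\epsilon$, while the second-order inequalities relate $D^2 v_n(k, x^*)$ to $D^2 u(k/n, y^*)$ up to a $1/\epsilon$ term. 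Expanding the recursion for $v_n$ to second order around $x^*$ and invoking the heat equation for $u$ at $y^*$, one finds $M_{k+1} - M_k = O(n^{-1-\gamma/2})$: the leading $O(n^{-1})$ heat-kernel contributions cancel between $v_n$ and $u$, and the Taylor remainder is bounded using $\E{|X_1|^{2+\gamma}}$. Iterating $n$ times and optimizing $\epsilon$ yields $M_n = O(n^{-\gamma/2})$, and a symmetric argument swapping the roles of $u$ and $v_n$ gives a matching lower bound, so $|v_n(n, 0) - u(1, 0)| = O(n^{-\gamma/2})$. This proves both Theorem~\ref{t.clt} and the quantitative Berry--Esseen-type bound announced in the abstract.

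The main obstacle is the comparison step at the maximizer of $\Psi_{n,\epsilon}$. In the classical PDE setting, both functions being compared are viscosity sub/supersolutions of the same equation, and the PDE inequalities at the maximizer provide the comparison directly. Here, $v_n$ satisfies only a discrete-time, integral-type recursion rather than a differential equation, so extracting a second-order inequality for it requires Taylor expansion of the recursion whose remainder must be controlled via $\E{|X_1|^{2+\gamma}}$. Choosing $\epsilon$ to balance the regularity gained from the penalty against the error accumulated over $n$ discrete time steps is delicate; this balancing act is the crux of the argument and is what yields the $O(n^{-\gamma/2})$ rate.
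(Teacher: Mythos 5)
Your proposal takes a genuinely different route from the paper, and it is worth contrasting the two. The paper doubles the \emph{time} variable: it compares $u_n(x,k/n)$ with $u(x,s)$ at the \emph{same} spatial point, adds a penalty $C_n(k/n-s)^2$, and analyzes a single global space-time maximizer, using the maximality in $s$ to bring in $\partial_t u = \tfrac12\tr(\Sigma D^2 u)$ and the maximality in $k$ to bring in the recurrence \eqref{e.uNscheme}. You instead double the \emph{space} variable, synchronize the times ($s=k/n$), and run a Gr\"onwall-type iteration $M_{k+1}-M_k = O(n^{-1-\gamma/2})$. Both are legitimate, but note that after mollification $f\in C^\infty$, so both $v_n(k,\cdot)$ and $u(k/n,\cdot)$ are smooth in $x$ and the space penalty $|x-y|^2/(2\epsilon)$ is not actually needed for regularity: you could work directly with $M_k := \sup_x\big(v_n(k,x)-u(k/n,x)\big)$, where the second-order condition at an interior maximizer already gives $D^2 v_n(k,x^*) \preceq D^2 u(k/n,x^*)$ and hence $\tr(\Sigma D^2 v_n) \le \tr(\Sigma D^2 u)$. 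Dropping the $\epsilon$-penalty would make your argument both simpler and closer in spirit to the paper's.

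Two points in your sketch need attention. First, for the supremum $M_k$ to be attained you need more than $f\in C_b^\infty$: a bounded smooth function need not have a maximizer on $\R^d\times\R^d$. The paper solves this by first truncating $f$ to compact support (Chebyshev handles the tail), then convolving with a Gaussian, which produces a $C^4$ function that \emph{decays at infinity}; with that decay the supremum of $\phi_n$ is either nonpositive or attained (Lemma~\ref{lem:supremum_info}). You should incorporate a similar decay hypothesis, or work with near-maximizers throughout. Second, your second-order comparison ``$D^2 v_n(k,x^*) \lesssim D^2 u(k/n,y^*)$'' is applied at a time-$k$ pair, but the maximizer $(x^*,y^*)$ you extract it from is a maximizer of $\Psi_{n,\epsilon}(k+1,\cdot,\cdot)$, i.e.\ at time $k+1$. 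You then need to transfer the Hessian inequality across one time step; this costs $O(1/n)$ in $\tr(\Sigma D^2 v_n)$ and $\tr(\Sigma D^2 u)$ (controlled by $\|f\|_{C^4}$, as in the second and third bounds of Corollary~\ref{cor:derivuNspacediff}), which, after multiplying by the prefactor $1/(2n)$, contributes only $O(n^{-2})$ per step and does not spoil the $O(n^{-\gamma/2})$ rate. Neither issue is fatal, but both must be made explicit for the argument to close.
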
 
The notation $\xi\sim  \normal(0,\Sigma)$ means that $\xi$ is a centered Gaussian with covariance matrix $\Sigma$. By the Portmanteau theorem, the above formulation is equivalent to the formulation in terms of cumulative distribution functions.

We write $C^k(\R^d)$ for the set of functions $f:\R^d\to \R$ for which all partial derivatives of order up to $k$ exist and are uniformly bounded. For test functions $f \in C^4(\R^d)$ which vanish at infinity, our proof technique directly yields a quantitative rate of convergence in \eqref{eq:convergence_of_test} as soon as the random variables $X_i$ have a $2+\gamma$ moment for some $\gamma > 0$.
\begin{theorem}\label{t.prohorov}
For all $f \in C^4(\R^d)$ with $|f(x)|\to 0$ as $|x| \to \infty$, there exists $C=C(d,\|f\|_{C^4},\Sigma)$ such that for all $\gamma \in (0,1]$, for all $n \ge 1$, 
\[
\left|\E{f\left(\frac{S_n}{n^{1/2}}\right)}-  \E{f(\xi)}\right| \leq \frac{C \E{|X_1|^{2+\gamma}|}}{n^{\gamma/2}}\, ,
\]
where $\xi\sim  \normal(0,\Sigma)$.  
\end{theorem}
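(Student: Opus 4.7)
The plan is to compare the discrete random walk with the continuous Gaussian heat semigroup one step at a time -- morally a discrete-versus-continuous doubling comparison. Let $\xi'$ be an independent copy of $\xi$, and set $v(t,x):=\E{f(x+\sqrt{t}\,\xi')}$; then $v$ solves the heat equation $\partial_t v = \tfrac{1}{2}\tr(\Sigma D^2 v)$ with $v(0,\cdot)=f$, and since Gaussian convolution does not amplify derivatives, $\|v(t,\cdot)\|_{C^k}\le \|f\|_{C^k}$ for each $k\le 4$, uniformly in $t\ge 0$. Introduce the interpolating sequence $g(k):=\E{v\bigl((n-k)/n,\,S_k/\sqrt{n}\bigr)}$ for $k=0,1,\ldots,n$; then $g(0)=v(1,0)=\E{f(\xi)}$ and $g(n)=\E{f(S_n/\sqrt{n})}$, so it suffices to bound the telescoping sum $\sum_{k=0}^{n-1}(g(k+1)-g(k))$.

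For each $k$, I will condition on $S_k$ and write $x=S_k/\sqrt{n}$, so that the increment equals the expectation of $v((n-k-1)/n,\,x+X_{k+1}/\sqrt{n})-v((n-k)/n,\,x)$. The plan is to analyse this as a doubled difference: Taylor in space on the first term and Taylor in time on the second. Expanding in space to second order with remainder $R(x,h):=v(\cdot,x+h)-v(\cdot,x)-\nabla v(\cdot,x)\cdot h-\tfrac12 h^T D^2 v(\cdot,x)\,h$ and using $\E{X_{k+1}}=0$, the first term contributes $v((n-k-1)/n,x)+\tfrac{1}{2n}\tr(\Sigma D^2 v((n-k-1)/n,x))+\E{R(x,X_{k+1}/\sqrt{n})}$. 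Expanding the second term in time around $(n-k-1)/n$ and invoking the heat equation $\partial_t v=\tfrac{1}{2}\tr(\Sigma D^2 v)$ yields the same leading two terms plus an $O(\|f\|_{C^4}/n^2)$ remainder, since $\partial_{tt} v = \tfrac{1}{4}\tr(\Sigma D^2 \tr(\Sigma D^2 v))$ is controlled by four spatial derivatives. Subtracting, the $\tfrac{1}{2n}\tr(\Sigma D^2 v)$ contributions cancel and only Taylor remainders survive.

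The main technical step is controlling the spatial remainder under only a finite $2+\gamma$ moment for $\gamma\in(0,1]$, since a plain third-order Taylor bound requires $\E{|X_1|^3}<\infty$. I will use the interpolation
\[
|R(y,h)| \le C\min\bigl(\|D^2 v\|_\infty |h|^2,\,\|D^3 v\|_\infty |h|^3\bigr)\le C\|f\|_{C^3}\,|h|^{2+\gamma},
\]
which follows from $\min(a|h|^2,b|h|^3)\le a^{1-\gamma}b^{\gamma}|h|^{2+\gamma}$ together with the semigroup estimate $\|v(t,\cdot)\|_{C^k}\le\|f\|_{C^k}$. Taking expectation yields a per-step spatial error of $C\|f\|_{C^3}\,n^{-(2+\gamma)/2}\,\E{|X_1|^{2+\gamma}}$; summing the $n$ increments produces $C\|f\|_{C^3}\,n^{-\gamma/2}\,\E{|X_1|^{2+\gamma}}$, plus the time-discretization contribution $C\|f\|_{C^4}/n$. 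The latter is dominated by the former using the Jensen lower bound $\E{|X_1|^{2+\gamma}}\ge (\tr\Sigma)^{(2+\gamma)/2}>0$ (where the non-degeneracy of $\Sigma$ enters) together with $n^{-1}\le n^{-\gamma/2}$, producing the advertised rate.

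The main obstacle is this fractional-moment interpolation: the hypothesis $f\in C^4$ is exactly the regularity needed to simultaneously control the third-order spatial Taylor remainder (via interpolation with $\|D^2 v\|_\infty$) and the second time derivative of $v$ through the heat equation. The preservation of $C^k$ norms under the heat semigroup is routine but must be checked uniformly on $t\in[0,1]$.
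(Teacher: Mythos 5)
Your proof is correct, but it takes a genuinely different route from the paper's. What you have rediscovered is a Lindeberg-type replacement argument dressed in heat-semigroup clothing (sometimes called the Trotter or ``swapping'' method): you interpolate via $g(k)=\E{v\bigl((n-k)/n,S_k/\sqrt{n}\bigr)}$, telescope, and control each increment by expanding in space and in time and cancelling the $\tfrac{1}{2n}\tr(\Sigma D^2 v)$ contributions. This is \emph{not} doubling of variables in the PDE sense. The paper's proof of Theorem~\ref{t.prohorov} goes through Proposition~\ref{prop.CLargument}: it introduces a penalized functional
\[
\phi_n(x,k,s)=u_n(x,k/n)-u(x,s)-c_n(k/n+s)-C_n(k/n-s)^2
\]
over the \emph{doubled} time variables $(k,s)$, locates a maximizer $(x_0,k_0,s_0)$, uses the penalization to force $|k_0/n-s_0|$ small, and then reads off a contradiction from first-order optimality in time (giving $\partial_s u$ via the PDE, and a discrete forward difference of $u_n$) and second-order optimality in space (giving $D^2 u_n-D^2 u\preceq 0$ at the maximizer, hence a sign on the traces). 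The two proofs buy different things. Your argument is shorter, does not need the decay hypothesis $|f(x)|\to 0$ (the paper only needs it to guarantee the maximizer of $\phi_n$ exists), and in that sense proves slightly more. It relies essentially on the linearity of both the heat equation and the expectation operator: the exact cancellation of the $\tfrac{1}{2n}\tr(\Sigma D^2 v)$ terms is what makes the telescoping close. The paper's maximum-principle/doubling argument does not use linearity in this way and is designed to carry over to nonlinear PDEs and nondifferentiable schemes (viscosity solutions, Hamilton--Jacobi); that robustness is precisely the paper's pedagogical point, even though it costs extra bookkeeping (the case analysis $k_0=0$, $s_0=0$, etc.).

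Two small points worth being explicit about in your write-up. First, your time-Taylor expansion is at $t_1=(n-k-1)/n$, which equals $0$ when $k=n-1$; you should note that because $f\in C^4$, the formula $v(t,x)=\E{f(x+\sqrt{t}\,\xi')}$ together with the heat equation gives $\partial_{tt}v(t,x)=\tfrac{1}{4}\E{\tr\bigl(\Sigma D^2\tr(\Sigma D^2 f)(x+\sqrt{t}\,\xi')\bigr)}$, which is continuous and bounded on all of $[0,1]\times\R^d$ (one-sided in $t$ at $t=0$), so the second-order Taylor remainder is legitimate there too. Second, your final absorption step $C\|f\|_{C^4}/n\le C'\E{|X_1|^{2+\gamma}}n^{-\gamma/2}$ uses Jensen, $\E{|X_1|^{2+\gamma}}\ge(\tr\Sigma)^{(2+\gamma)/2}\ge\min\bigl(\tr\Sigma,(\tr\Sigma)^{3/2}\bigr)>0$, a constant depending only on $\Sigma$ and uniform over $\gamma\in(0,1]$; this is where non-degeneracy of $\Sigma$ is used, as you correctly flag. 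With those details pinned down the argument is complete.
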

The $C^4$ norm $\|f\|_{C^4}$ appearing in the statement is defined in \eqref{eq:c4norm}, below.
When $\ga= 1$, Theorem~\ref{t.prohorov} is a version of the multidimensional Berry-Esseen theorem \cite{MR3498,MR0011909}. For $\ga \in (0,1]$, this is a version of a result of Bhattacharya and Rao \cite[Theorem 18.1, Corollaries 18.2-3]{MR3396213}; however, their result allows for more general test functions $f$ and for steps which  may not be identically distributed; this leads to more complicated dependency on the tail behaviour of the $X_i$ and of~$f$. 
 
Here is the key idea underlying the proof. 
For any bounded continuous function $f:\R^d \to \R$ and any integer $n \ge 1$, define a function $u_{n}=u_{n,f}:\R^d \times [0,\infty) \to \R$ 

%
\begin{equation}\label{eq:undef}
u_n\left(x,t\right) := \E{f\left(x+\mfrac{S_{\lfloor nt\rfloor}}{n^{1/2}}\right)}.
\end{equation}
Note that $u_{n}$ is continuous in $x$, while for each $x\in \RR^{d}$, $u_{n}(x, \cdot)$ is constant on time intervals of the form $[kn^{-1}, (k+1)n^{-1})$ for $k\in \ZZ_+=\{0\}\cup\NN$. 
For $f$ and $u_n=u_{n,f}$ as above, the following proposition is straightforward.
\begin{proposition}\label{prop:recurrence}
For all $t \ge 0$ and all $x \in \R^d$, 
\begin{align}\label{e.uNscheme}
u_n(x,t+n^{-1}) 
				& = \E{u_n(x+n^{-1/2} X_{\lfloor nt\rfloor+1},t)} 
\end{align}
\end{proposition}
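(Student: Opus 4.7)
The plan is to compute the left-hand side of \eqref{e.uNscheme} directly from the definition \eqref{eq:undef} and check that it coincides with the right-hand side, using the independence structure of the $X_i$'s together with the fact that advancing $t$ by $n^{-1}$ advances the index $\lfloor nt\rfloor$ by exactly one. The argument is essentially bookkeeping; I do not see any serious obstacle.

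First, I would record the elementary identity $\lfloor n(t+n^{-1})\rfloor = \lfloor nt\rfloor + 1$, which holds since $n(t+n^{-1}) = nt + 1$ and $\lfloor s + 1\rfloor = \lfloor s\rfloor + 1$ for every real $s$. Set $k := \lfloor nt\rfloor$. Decomposing $S_{k+1} = S_k + X_{k+1}$, the definition \eqref{eq:undef} then gives
\[
u_n(x, t + n^{-1}) = \E{f\left(x + \frac{X_{k+1}}{n^{1/2}} + \frac{S_k}{n^{1/2}}\right)}.
\]

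Next, I would use that $X_{k+1}$ is independent of $(X_1, \ldots, X_k)$ by the \iid\ assumption, and in particular independent of $S_k$. Conditioning on $X_{k+1}$ and applying the tower property (valid since $f$ is bounded, so everything in sight is integrable), the inner conditional expectation with the base point $x' := x + n^{-1/2} X_{k+1}$ held fixed equals $\E{f(x' + S_k/n^{1/2})} = u_n(x', t)$, by the definition \eqref{eq:undef} together with $k = \lfloor nt\rfloor$. Taking the outer expectation over $X_{k+1} = X_{\lfloor nt\rfloor + 1}$ then yields
\[
u_n(x, t+n^{-1}) = \E{u_n\left(x + n^{-1/2} X_{\lfloor nt\rfloor + 1}, t\right)},
\]
which is precisely the right-hand side of \eqref{e.uNscheme}.

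The only points requiring any care are the floor identity and the justification of the conditional-expectation manipulation; both are entirely routine. Conceptually, \eqref{e.uNscheme} presents $u_n$ as the solution of a discrete-time analogue of the heat equation, with the averaging operator $g \mapsto \E{g(\cdot + n^{-1/2} X_1)}$ acting as a one-step propagator---a structure that one expects to exploit in the doubling-of-variables comparison with the true heat semigroup in what follows.
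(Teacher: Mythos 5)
Your proof is correct and follows essentially the same route as the paper's: the floor identity $\lfloor n(t+n^{-1})\rfloor = \lfloor nt\rfloor+1$, the decomposition $S_{k+1}=S_k+X_{k+1}$, and independence of $X_{k+1}$ from $S_k$. The only cosmetic difference is that you phrase the final step via conditioning and the tower property, whereas the paper integrates against the law of $X_{\lfloor nt\rfloor+1}$ and invokes Fubini; these are two ways of saying the same thing.
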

\begin{proof}
By definition,
\[
u_n(x,t+n^{-1}) = \E{f(x+n^{-1/2} S_{\lfloor nt\rfloor+1})}
= 
\EE\left[f\left(x+ \xmesh\left(S_{\lfloor nt\rfloor} + X_{\lfloor nt\rfloor+1} \right)\right)\right].
\]
Since $X_{\lfloor nt\rfloor+1}$ is independent of $S_{\lfloor nt\rfloor}$, writing $\mathcal{L}$ for the law of $X_{\lfloor nt\rfloor +1}$ (which is also the law of $X_i$ for all $i \ge 1$), Fubini's theorem yields that 
\begin{align*}
u_n(x, t+\tmesh) &= \int_{\R^d} \EE\left[f\left(x+\xmesh r  +  \xmesh S_{\lfloor nt\rfloor} \right)\right] \, \mathcal{L}(dr) \\
&= \int_{\R^d} u_n(x + \xmesh r, t) \, \mathcal{L}(dr)\\
&= \EE\left[u_n(x + \xmesh X_{\lfloor nt\rfloor +1}, t)\right].\qedhere
\end{align*}
\end{proof}
Subtracting $u_n(x,t)$ from each side of \eqref{e.uNscheme} and dividing by $n^{-1}$ yields
\begin{equation}\label{eq:approximate}
\frac{
u_n\left(x,t+n^{-1}\right)-
u_n\left(x,t\right)
}{n^{-1}}
=
\E{
\frac{u_n\left(x+n^{-1/2}X_{\lfloor nt\rfloor+1},t\right)
-
u_n\left(x,t\right)
}{(n^{-1/2})^2}\, 
}.
\end{equation}
The left-hand side looks like a discrete time derivative, while the right-hand side is, in fact, acting like a second-order discrete space derivative. Indeed, if $f \in C^2(\R^d)$, then also $u_n(\cdot,t) \in C^2(\R^d)$, and one can perform a Taylor approximation of $u_{n}$ on the right-hand side of \eqref{eq:approximate}. Since $X_{\lfloor nt\rfloor+1}$ is centered, the expectation then causes the first-order term of the Taylor expansion to vanish. 
More precisely, writing $D^2u_n$ to denote the Hessian of $u_n$ in the spatial coordinates, 
we show in Lemma~\ref{lem.uNspacediff} that the right-hand side closely approximates half the trace of $\Sigma D^2 u_n(x,t)$: for any $T > 0$, 
%
\[
\lim_{n \to \infty} 
\sup_{(x,t) \in \R^d \times [0,T]}
\left|
\E{
\frac{u_n\left(x+n^{-1/2}X_{\lfloor nt\rfloor+1},t\right)
-
u_n\left(x,t\right)
}{(n^{-1/2})^2}}
-
\frac{1}{2} \tr(\Sigma D^2 u_n(x,t))
\right|=0\, .
\]
Thus, if as $n\to \infty$, $u_n$ converges to some limiting function $u:\R^d\times[0, \infty) \to \R$, then in principle, $u$ should satisfy the PDE
\begin{equation}
	\begin{cases} \label{e.uheateqn0}
		\partial_t u = \frac{1}{2}\tr(\Sigma D^{2}u) &\text{in $\R^{d} \times (0,\infty),$} \\
		u(x,0) = f(x) &\text{on $\R^{d}$}\, .
	\end{cases}
\end{equation}
This is nothing but the heat equation; its 
unique solution $u=u_f$ is given by 
\begin{equation} \label{e.uformula}
	u(x,t) = \E{f(x + t^{1/2}\xi)}\, ,
\end{equation}
where $\xi \sim \normal(0,\Sigma)$. 
The function $u$ defined by \eqref{e.uformula} is 
twice continuously differentiable in space and continuously differentiable in time for $t > 0$; we reserve the notation $C^{2,1}(\R^d\times(0,\infty))$ for such functions.\footnote{In fact, $u$ is infinitely differentiable in both space and time, but we do not need this.}\ Thus, we can establish the CLT by showing that 
\eqref{eq:approximate} is indeed a good approximation of \eqref{e.uheateqn0}. By the definitions of $u_{n,f}$ and $u_f$, proving \eqref{eq:convergence_of_test} is equivalent to proving that $u_{n,f}(0,1)\rightarrow u_f(0,1)$ as $n\to \infty$ for all bounded uniformly continous $f:\R^d\to \R$. 
This is where doubling of variables comes in.

To introduce the technique, we use a simplified version of it (which does not actually involve any doubling of variables) to prove the uniqueness of solutions to \eqref{e.uheateqn0} among $C^2$ functions which vanish at infinity. Uniqueness in fact holds within a much broader class, but proving this is more involved and less useful for the expository purposes of this article. 
\begin{proposition}\label{eq:heat_sol_comp}
Fix continuous $f:\R^d \to \R$ with $|f(x)|\to 0$ as $|x| \to \infty$, fix $T>0$, and let $u,v \in C^{2,1}(\R^d \times (0,\infty))$ be such that \[\limsup_{|x| \to \infty} \sup_{t \in [0,T]} \max(|u(x,t)|,|v(x,t)|)=0\,.
\]
If $u(x,0)=f(x)=v(x,0)$ for all $x \in \R^d$ and $\partial_t u \ge \tfrac{1}{2}\tr(\Sigma D^{2}u)$ on $\R^{d} \times (0,\infty)$ 
and $\partial_t v \le \tfrac{1}{2}\tr(\Sigma D^{2}v)$ on $\R^{d} \times (0,\infty)$, 
then $u \ge v$ on $\R^d\times[0,T]$. 
\end{proposition}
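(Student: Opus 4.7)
My strategy is a classical parabolic maximum principle argument with a linear-in-time penalization; this is essentially the simplest incarnation of the general philosophy underlying doubling of variables. Set $w := v - u$. Subtracting the two differential inequalities shows that $w \in C^{2,1}(\R^d \times (0,\infty))$ is a classical subsolution of the heat equation, $\partial_t w \le \tfrac{1}{2}\tr(\Sigma D^2 w)$ on $\R^d \times (0,\infty)$, with $w(\cdot,0) \equiv 0$ and with $w$ vanishing uniformly at spatial infinity on $[0,T]$. The goal reduces to showing $w \le 0$ on $\R^d \times [0,T]$.

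\textbf{Penalization and compactness.} For each $\epsilon > 0$, introduce the penalized function $\tilde w(x,t) := w(x,t) - \epsilon t$, which satisfies the \emph{strict} subsolution inequality $\partial_t \tilde w \le \tfrac{1}{2}\tr(\Sigma D^2 \tilde w) - \epsilon$. The uniform decay of $w$ at spatial infinity yields some $R > 0$ such that $\tilde w(x,t) \le \epsilon/2$ for all $|x| > R$ and $t \in [0,T]$; combined with $\tilde w(\cdot,0) \equiv 0$, this forces any value of $\tilde w$ exceeding $\epsilon/2$ to be attained on the compact set $\overline{B(0,R)} \times (0,T]$. In particular the supremum of $\tilde w$ is attained whenever it exceeds $\epsilon/2$.

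\textbf{Deriving the contradiction.} Suppose for contradiction that $M := \sup_{\R^d \times [0,T]} \tilde w > \epsilon/2$. Then $M$ is attained at some $(x_0,t_0)$ with $|x_0| \le R$ and $t_0 \in (0,T]$. At such a spacetime maximum the spatial Hessian $D^2 \tilde w(x_0,t_0)$ is negative semi-definite, and positive semi-definiteness of $\Sigma$ yields $\tr(\Sigma D^2 \tilde w(x_0,t_0)) \le 0$. Meanwhile $\partial_t \tilde w(x_0,t_0) \ge 0$: equality by interior maximality if $t_0 \in (0,T)$, and the one-sided inequality at $t_0 = T$ follows since $\tilde w(x_0,T-h) \le \tilde w(x_0,T)$ for small $h > 0$. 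Substituting into the strict subsolution inequality gives $0 \le -\epsilon$, a contradiction. Hence $\sup \tilde w \le \epsilon/2$, so $w(x,t) \le \epsilon(T+1/2)$ on $\R^d \times [0,T]$, and letting $\epsilon \downarrow 0$ completes the argument.

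\textbf{Main obstacle.} The only real subtlety is the compactness reduction, which relies on the uniform-vanishing-at-infinity hypothesis in an essential way; without it one would need an auxiliary growth-versus-barrier comparison. The role of the $-\epsilon t$ term is to promote the non-strict subsolution inequality to a strict one at candidate maximizers, which is precisely the device that will be generalized by the quadratic penalization in $|x-y|^2$ when the full doubling-of-variables technique is deployed on the non-smooth discrete-in-time approximants $u_n$ later in the paper.
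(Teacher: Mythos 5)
Your argument is correct and is essentially the same parabolic maximum-principle argument with linear-in-time penalization that the paper uses: both proofs penalize by a term proportional to $t$ to make the subsolution inequality strict, use the uniform decay at spatial infinity to ensure a maximizer exists, and derive a contradiction from the first- and second-derivative tests at that maximizer (in particular $D^2\tilde w \preceq 0$ and $\Sigma \succeq 0$ give $\tr(\Sigma D^2 \tilde w) \le 0$). The only difference is bookkeeping: the paper fixes the penalty coefficient to be $\sigma/(2T)$ (with $\sigma$ the supremum of $(v-u)_+$, assumed positive for contradiction) so that the maximum of the penalized function is at least $\sigma/2 > 0$, whereas you use an arbitrary $\epsilon > 0$ and let $\epsilon \downarrow 0$ at the end — a variant that is marginally cleaner since it avoids the self-referential choice of penalty, but it is the same underlying mechanism.
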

This establishes the claimed uniqueness of solutions to \eqref{e.uheateqn0} since if $\partial_t u= \tfrac{1}{2}\tr(\Sigma D^{2}u)$ 
and $\partial_t v= \tfrac{1}{2}\tr(\Sigma D^{2}v)$, then the proposition implies that $u \ge v$, and by swapping the roles of $u$ and $v$ in the proposition, we see that also $v \ge u$. 
\begin{proof}[Proof of Proposition~\ref{eq:heat_sol_comp}]
Let $T>0$ be as in the statement of the proposition, and let $\sigma:=\sup(\left(v(x,t)-u(x,t)\right)_{+}:(x,t) \in \R^d\times[0,T])$. If $\sigma=0$, then the claim is immediate, so instead assume that $\sigma>0$. Let
\[
\Psi(x,t) := v(x,t)-u(x,t)-t\cdot \sigma/(2T). 
\]
Then $\Psi(x,0)=0$ for all $x \in \R^d$, and
$\Psi^*:=\sup(\Psi(x,t):(x,t) \in\R^d\times[0,T]) \ge \sigma/2$. 

Observe that if $((x^\ell,t^\ell),\ell \ge 1)$ is any sequence of elements in $\R^d \times [0,T]$ such that $|x^\ell| \to \infty$ as $\ell \to \infty$, then $v(x^\ell,t^\ell) \to 0$ and $u(x^\ell,t^\ell) \to 0$, and thus $\limsup \Psi(x^\ell,t^\ell) \le 0$. Now suppose that $((x^{\ell}, t^{\ell}), \ell \geq 1)$ is a sequence such that $\Psi(x^\ell,t^\ell) \to \Psi^*$; then since $\Psi^* > 0$, we conclude that $(x^\ell,\ell \ge 1)$ is bounded, so there exists a subsequence of $((x^\ell,t^\ell),\ell \ge 1)$ which converges to a point $(x^*,t^*) \in \R^d \times [0,T]$. By continuity, the limit satisfies $\Psi(x^*,t^*)=\Psi^*$, so since $\Psi(x,0)=0$ for all $x$ we conclude that $t^* > 0$. 

Since $\Psi$ is maximized at $(x^*,t^*)$, necessarily $\partial_t \Psi(x^*,t^*) \ge 0$, with equality if $t^* < T$; by the definition of $\Psi$ this yields that  
\[
\partial_t v(x^*,t^*) - \partial_t u(x^*,t^*) \ge \sigma/(2T). 
\]
By the assumptions on $u$ and $v$, the preceding inequality implies that 
\[
\tr(\Sigma D^2 v(x^*,t^*))-\tr(\Sigma D^2 u(x^*,t^*)) \ge \sigma/T. 
\]
To complete the proof, note that since $\Psi$ is maximized at $(x^*,t^*)$, the Hessian of $\Psi$ in the spatial variable must be non-positive semidefinite, i.e. $D^2 \Psi(x^*,t^*) \preceq 0$.
Since $\Sigma$ is positive semidefinite, it follows that $\Sigma D^2\Psi(x^*,t^*) \preceq 0$, 
so 
\[ 
\tr(\Sigma D^2\Psi(x^*,t^*))=
\tr(\Sigma D^2 u(x^*,t^*)) - \tr(\Sigma D^2 v(x^*,t^*))\le 0\, .
\]
Combined with the prior display, this contradicts the assumption that $\sigma>0$.
\end{proof}
To prove the central limit theorem, we wish to use the same style of argument as in the above proof, with $u=u_f$ the solution of the heat equation \eqref{e.uheateqn0} but with $v$ replaced by the approximation $u_n=u_{n,f}$. The idea is still to consider the location where a function of the form $u_n(x,t)-u(x,t)-ct$ is maximized, then argue that the maximum must be small. However, since the time variable in $u_n$ is effectively discrete, and that in $u$ is continuous, the cost of replacing $v$ by $u_n$ is that we need to replace the function $\Phi$ in the proof by a function with two time variables; we use a function of the form
\[
\Phi_n(x,t,s) = u_n(x,t)-u(x,s) - c_n(t+s)-C_n(t-s)^2,
\]
for suitably chosen constants $c_n,C_n$. 
The role of the additional term $C_n(t-s)^2$ is a penalization to enforce that the maximum of this function is achieved at a point where the difference $|t-s|$ is small. The details of this argument appear in the proof of Proposition~\ref{prop.CLargument}, below.

The question of whether the function $u_{n}$ approximates $u$ can also be posed in the framework of \emph{convergence of finite difference schemes}. A fundamental issue in the numerical analysis of PDEs is to construct discrete, finite difference schemes which correctly approximate the true solution of  a given PDE. Indeed, our proofs of Theorems~\ref{t.clt} and~\ref{t.prohorov} are inspired by the work of Crandall and Lions \cite[Theorem 1]{schemeshj}, where the authors obtain a quantitative estimate measuring the difference between discrete functions $u_{n}$ satisfying a monotone finite difference scheme and the viscosity solution $u$ of a first-order nonlinear Hamilton-Jacobi equation. For second-order fully nonlinear equations, the question of convergence of finite difference schemes (and not rates of convergence) was established by Barles and Souganidis \cite{BS}. In our setting, the proofs are much more straightforward than those in \cite{schemeshj}; the fact that $u_{n}$ is continuous and regular (differentiable) in the spatial argument and the linearity of the PDE both simplify the arguments. This also allows us to prove our results without introducing the machinery of viscosity solutions.

\subsection*{Related work} 
The history of the development of the CLT is described in the book \cite{MR2743162}. In addition to the classical proof based on characteristic functions, a  number of others have appeared in the literature. The paper \cite{MR3845724}, which itself provides a new proof based on expanding the random variables in the statement of the CLT with respect to the Haar basis, also describes other proofs of the CLT, including those by Berry \cite{MR3498} and Trotter \cite{MR108847}; another new, elegant proof recently appeared in \cite{MR4407621}. 
The perspective taken in this paper is inspired by the recent works \cite{MR4146542,MR4391736,https://doi.org/10.48550/arxiv.2204.03689}, which introduced the idea of using probabilistic interpretations of approximation schemes for PDEs in order to prove convergence in distribution for random processes. The most similar argument to our own that we have found in the literature is a proof of the one-dimensional CLT for $\iid$ random variables with finite third moments, by Zong and Hu \cite{MR3147846}.  Their argument is rather different from ours, and in particular, it uses as input the theory of viscosity solutions and regularity theory of parabolic PDEs, whereas ours is self-contained. However, their proof is similar to ours in that its fundamental idea is also to use the connection to the heat equation.

\subsection{Notation}
	Write $\mathbb{S}^d$ for the set of symmetric $d\times d$ real matrices. For functions  $u:\R^d\times[0,\infty) \to \R$, we view $[0,\infty)$ as a time dimension, and write $Du\in \R^{d}$ and $D^2u \in \mathbb{S}^d$ to denote the gradient and Hessian of $u$ with respect to space, when these are defined.

A {\em multi-index} $\al$ is a $d$-tuple of nonnegative integers $\al = (\al_{1},\al_{2},\dots,\al_{d})$. For a multi-index $\alpha$, we define
	\begin{equation*}
		|\al| = \sum_{i=1}^d \al_{i} ,\quad 
		\partial^{\al}f = \frac{\partial^{|\al|}f}{\partial x_{1}^{\al_{1}}\partial x_{2}^{\al_{2}}\dots \partial x_{n}^{\al_{n}}}.
	\end{equation*}
	When $|\alpha|=0$ we take $\partial^\alpha f=f$ by definition. 
For $f \in C^k(\R^d)$, we define the $C^{k}$ norm of $f$ by 
	\begin{equation}\label{eq:c4norm}
		\norm{f}_{C^{k}} = \sum_{0 \leq |\al|\leq k} \norm{\partial^{\al} f}_{\infty}. 
	\end{equation}
	 If $f \not\in C^k(\R^d)$ then we set $\|f\|_{C^k}=\infty$.
	Note that $\norm{f}_{C^{0}} = \norm{f}_{\infty}$ and that $\norm{f}_{C^{k}}$ is non-decreasing in $k$. 
	
	For a matrix $M\in \mathbb{S}^{d}$, we use the the notation 
	\[
	\la_{\max}(M)=\max(|\lambda|:\lambda\mbox{ is an eigenvalue of }M)\, .
	\]

Finally, we write $\ZZ^{+} = \{0\}\cup\NN$, and let $\llb n \rrb = \{0,1,\dots,n\}$ for $n \in \ZZ^{+}$.

\section{Approximation Results for Discrete Derivatives}\label{s.lemmas}
In this section, we prove a series of lemmas which control ``discrete derivative'' expressions in terms of their true derivatives.  
First, note that if $f\in C^1(\R^d)$ then by the definition of $u=u_f$ and the dominated convergence theorem,
\begin{align*}
 \limsup_{|h| \downarrow 0} \frac{u(x+h,t)-u(x,t)}{|h|}
& 
\leq \limsup_{|h| \downarrow 0} \EE\left[\frac{|f(x+h+t^{1/2}\xi)-f(x+t^{1/2}\xi)|}{|h|}\right] \\
& \le 
\E{\limsup_{|h| \downarrow 0} \frac{|f(x+h+t^{1/2}\xi)-f(x+t^{1/2}\xi)|}{|h|}} \\
& \le \|f\|_{C^1}\, ,
\end{align*}
which implies that $\|u(\cdot,t)\|_{C^1} \le \|f\|_{C^1}$, and a similar argument shows that $\|u_n(\cdot,t)\|_{C^1} \le \|f\|_{C^1}$. 
These inequalities also automatically hold if $f \not\in C^1(\R^d)$, since in this case $\|f\|_{C^1}=\infty$.
More generally, for all integers $\ell \ge 1$ and all $t \ge 0$ we have 
\begin{equation}\label{e.smoothbds}
\norm{u_{n}(\cdot, t)}_{C^{\ell}} \vee \norm{u(\cdot,t)}_{C^{\ell}}\leq \norm{f}_{C^{\ell}}. 
\end{equation}
This in particular implies that if $\|f\|_{C^2}<\infty$ then the Hessians $D^2u_n$ and $D^2 u$ are both defined.

We next prove a lemma which we will use to obtain bounds on finite differences of $u_n$ and its derivatives, as well as a bound on the modulus of continuity of $u$.
\begin{lemma}
\label{lem.derivuNspacediff2}
If $g:\R^d\to \R$ is Borel measurable and $Y$ is a square-integrable random vector in $\R^d$ with 
$\EE[Y]=0$, $\cov(Y)=\Sigma$, and $\EE [|g(Y)|]<\infty$, then 
\[
\sup_{x \in \R^d} |\E{g(x+Y)-g(x)}| \le \|g\|_{C^2} \tr(\Sigma)/2\, .
\]
\end{lemma}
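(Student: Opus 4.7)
When $g\notin C^2(\R^d)$, the right-hand side equals $+\infty$ by the convention $\|g\|_{C^2}=\infty$ and the bound is vacuous, so I would first reduce to the case $g\in C^2(\R^d)$. From there, the plan is a second-order Taylor expansion: express $g(x+Y)-g(x)$ as a linear-in-$Y$ term plus a quadratic remainder, annihilate the linear term using $\EE[Y]=0$, and bound the remainder in terms of $\|g\|_{C^2}$ and $\tr(\Sigma)=\EE[|Y|^2]$.

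Concretely, for fixed $x$ and each realization of $Y$ I would apply Taylor's theorem with integral remainder to the one-variable map $s\mapsto g(x+sY)$ on $[0,1]$, obtaining
\[
g(x+Y)-g(x)=Dg(x)\cdot Y+\int_0^1(1-s)\,Y^T D^2 g(x+sY)\,Y\,ds.
\]
Taking expectations kills the first term, because $Dg(x)$ is deterministic and $\EE[Y]=0$, leaving
\[
\bigl|\EE[g(x+Y)-g(x)]\bigr|\le\int_0^1(1-s)\,\EE\bigl|Y^T D^2 g(x+sY)\,Y\bigr|\,ds.
\]
I would then bound the quadratic form $|Y^T D^2 g(z)\,Y|$ pointwise in $z$, either via the operator norm of the Hessian multiplied by $|Y|^2$, or entry-wise by $\sum_{i,j}|\partial_{ij}g(z)|\,|Y_iY_j|$ together with $|Y_iY_j|\le (Y_i^2+Y_j^2)/2$. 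Either route lets me pull a uniform bound on second partials outside the expectation; what remains is a linear combination of $\EE[Y_i^2]=\Sigma_{ii}$ that rewrites as a multiple of $\tr(\Sigma)$, and the factor $1/2$ is supplied by $\int_0^1(1-s)\,ds=\tfrac12$.

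The main (fairly minor) obstacle will be the bookkeeping of constants: since $\|g\|_{C^2}=\sum_{|\alpha|\le 2}\|\partial^\alpha g\|_\infty$ counts each mixed partial $\partial_{ij}g$ only once (for $i\le j$), I would need to carefully invoke the symmetry $\partial_{ij}g=\partial_{ji}g$ and the identity $\tr(\Sigma)=\sum_i\Sigma_{ii}$ to extract exactly the stated constant $\tfrac12\|g\|_{C^2}\tr(\Sigma)$, rather than a slightly larger multiple. Conceptually, however, the whole inequality is just Taylor's theorem combined with the vanishing of the first moment of $Y$.
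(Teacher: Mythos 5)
Your proposal is correct and follows essentially the same route as the paper: a second-order Taylor expansion of $g$ around $x$, annihilation of the first-order term via $\EE[Y]=0$, and a bound of the quadratic remainder by $\|g\|_{C^2}\,\EE[|Y|^2]/2=\|g\|_{C^2}\tr(\Sigma)/2$. The only substantive difference is that you use the integral form of the Taylor remainder, whereas the paper uses the Lagrange (mean-value) form and so must remark that the intermediate point $C=C(Y)\in[0,1]$ can be chosen Borel measurably; your integral form cleanly avoids that measurable-selection point, at the cost of slightly more bookkeeping to extract the factor $\tfrac12$ from $\int_0^1(1-s)\,ds$. Your attention to the constant-tracking — that $\|g\|_{C^2}$ counts each mixed partial only once, so one must exploit the Hessian's symmetry and $\tr(\Sigma)=\sum_i\Sigma_{ii}$ — is exactly the right concern, and both the operator-norm route and the entry-wise route you sketch do yield the stated constant.
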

\begin{proof} If $\|g\|_{C^2}=\infty$ then the bound is automatic so we may assume $\|g\|_{C^2}<\infty$. By Taylor's theorem, we may then write 
\[
g(x+Y)-g(x) = \langle Dg(x), Y\rangle + \frac{1}{2} \langle D^2 g(x+CY)Y,Y\rangle\, ,
\]
for some $C=C(Y)\in [0,1]$; it is not hard to see that $C$ may be chosen as a Borel measurable function of $Y$. It follows that 
\begin{align*}
\E{g(x+Y)-g(x)}
& =
\EE [\langle Dg(x), Y\rangle]
+
\frac12\EE[\langle (D^2 g(x+CY))Y,Y\rangle]\\
& =  
\frac12\EE[{\langle (D^2 g(x+CY))Y,Y\rangle}]\, ,
\end{align*}
the second equality holding since $\E{Y}=0$. Since $|\langle (D^2 g(x+CY))Y,Y\rangle|\le \|g\|_{C^2}|Y|^2$, it follows that 
\[
|\E{g(x+Y)-g(x)}| \le \|g\|_{C^2} \E{|Y|^2}/2 = \|g\|_{C^2}\tr(\Sigma)/2\, .\qedhere
\]
\end{proof}
\begin{corollary}\label{cor:derivuNspacediff}
Fix a bounded continuous function $f:\R^d \to \R$ and write $u_n=u_{n,f}$ and $u=u_f$. Then for all $x \in \R^d$ and $t \ge 0$, we have 
\[
|u_n(x,t+n^{-1})-u_n(x,t)| 
\leq \|f\|_{C^{2}}\tr(\Sigma)/(2n)\, ,
\]
and if $f \in C^2(\R^d)$, so that $D^2u_n$ is defined, then also 
\[
\left|\tr(D^2u_n(x,t+\tmesh))-\tr(D^{2}u_n(x,t))\right|
\le 
d\cdot\norm{f}_{C^{4}}\tr(\Sigma)/(2n)\, .
\]
Finally, for all $x \in \R^d$ and $t,h \ge 0$, 
\begin{equation*}
|u(x,t+h)-u(x,t)|\leq h\norm{f}_{C^{2}}\tr(\Sigma)/2. 
\end{equation*}
\end{corollary}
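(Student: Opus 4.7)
The plan is to derive all three bounds from Lemma~\ref{lem.derivuNspacediff2}, using the recurrence of Proposition~\ref{prop:recurrence} together with the formula \eqref{e.uformula} and the $C^k$-norm bounds \eqref{e.smoothbds}. In every case the strategy is the same: represent the relevant time increment as $\EE[g(x+Y)-g(x)]$ for a suitable $g$ and a suitable mean-zero random vector $Y$, then read off the bound from Lemma~\ref{lem.derivuNspacediff2}.

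For the first bound, Proposition~\ref{prop:recurrence} gives
\[
u_n(x,t+\tmesh)-u_n(x,t) = \EE\bigl[u_n(x+\xmesh X_{\lfloor nt\rfloor+1},t)-u_n(x,t)\bigr].
\]
Set $g = u_n(\cdot,t)$ and $Y = \xmesh X_{\lfloor nt\rfloor+1}$. Then $Y$ is mean zero with covariance $\Sigma/n$, and $\|g\|_{C^2}\le\|f\|_{C^2}$ by \eqref{e.smoothbds}, so Lemma~\ref{lem.derivuNspacediff2} gives the desired bound $\|f\|_{C^2}\tr(\Sigma)/(2n)$.

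For the second bound, I would first justify differentiating the identity in Proposition~\ref{prop:recurrence} under the expectation. Since $f\in C^4(\R^d)$ implies $\|u_n(\cdot,t)\|_{C^4}\le \|f\|_{C^4}<\infty$ by \eqref{e.smoothbds}, dominated convergence applies and, for each multi-index $\alpha$ with $|\alpha|\le 4$,
\[
\partial^{\alpha} u_n(x,t+\tmesh) = \EE\bigl[\partial^{\alpha} u_n(x+\xmesh X_{\lfloor nt\rfloor+1},t)\bigr].
\]
Writing $\alpha_i$ for the multi-index with a $2$ in position $i$ and $0$ elsewhere, the trace of the Hessian is $\tr(D^2 u_n) = \sum_{i=1}^d \partial^{\alpha_i}u_n$, so subtracting and taking $g = \partial^{\alpha_i} u_n(\cdot,t)$ and $Y = \xmesh X_{\lfloor nt\rfloor+1}$ in Lemma~\ref{lem.derivuNspacediff2} yields
\[
|\partial^{\alpha_i}u_n(x,t+\tmesh)-\partial^{\alpha_i}u_n(x,t)|\le \|\partial^{\alpha_i}u_n(\cdot,t)\|_{C^2}\cdot \tr(\Sigma)/(2n)\le \|f\|_{C^4}\tr(\Sigma)/(2n),
\]
since differentiating twice more still keeps the relevant derivatives within those counted by $\|f\|_{C^4}$. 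Summing over $i=1,\dots,d$ gives the claimed factor of $d$.

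For the third bound, I use \eqref{e.uformula} and the semigroup property of Gaussians: if $\xi_1,\xi_2$ are independent $\normal(0,\Sigma)$, then $t^{1/2}\xi_1+h^{1/2}\xi_2 \stackrel{d}{=} (t+h)^{1/2}\xi$, so
\[
u(x,t+h) = \EE[f(x+t^{1/2}\xi_1+h^{1/2}\xi_2)] = \EE[u(x+h^{1/2}\xi_2,t)].
\]
Taking $g = u(\cdot,t)$ and $Y = h^{1/2}\xi_2$, which is mean zero with covariance $h\Sigma$, Lemma~\ref{lem.derivuNspacediff2} together with \eqref{e.smoothbds} gives $|u(x,t+h)-u(x,t)|\le \|f\|_{C^2}\tr(h\Sigma)/2$, as required. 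The only mildly delicate point in the whole argument is justifying the interchange of derivative and expectation in the second part, but this is immediate from the uniform derivative bounds inherited from $f\in C^4(\R^d)$, so I do not expect any real obstacle.
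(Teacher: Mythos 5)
Your proposal is correct and follows essentially the same route as the paper's proof: all three bounds are obtained by feeding a suitable $g$ and mean-zero $Y$ into Lemma~\ref{lem.derivuNspacediff2}, with the recurrence of Proposition~\ref{prop:recurrence} (and differentiation under the expectation) for the first two bounds and the Gaussian semigroup identity for the third. The only cosmetic difference is that the paper bounds all entries of $D^2u_n(x,t+\tmesh)-D^2u_n(x,t)$ and then reads off the diagonal to control the trace, whereas you bound the diagonal entries directly; this is equivalent.
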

\begin{proof}
Fix $t \ge 0$ and define $g(x)=u_n(x,t)$. Then by \eqref{e.smoothbds} we have
$\|g\|_{C^2} \le \|f\|_{C^2}$, 
so applying 
Lemma \ref{lem.derivuNspacediff2} with this choice of $g$ and with $Y=X_{\lfloor nt\rfloor+1}n^{-1/2}$ yields that 
\[
		\left|\E{u_{n}(x+X_{\lfloor nt\rfloor+1}\xmesh, t) - u_{n}(x,t)}\right| \leq \|f\|_{C^{2}}\tr(\Sigma)/(2n); 
\]
note that $\cov{Y}=n^{-1}\cov{X_{\lfloor nt\rfloor+1}}$. 
By \eqref{e.uNscheme}, we have $\E{u_{n}(x+X_{\lfloor nt\rfloor+1}\xmesh,t)}=u_n(x,t+\tmesh)$ and the first bound now follows.

For the second inequality, we assume $\|f\|_{C^4}<\infty$ since otherwise the bound is automatic. Fix $i,j \in \{1,\ldots,d\}$, define $g(x)=\tfrac{\partial^2}{\partial x_i \partial x_j} u_n(x,t)$, and note that $\|g\|_{C^2} \le \|f\|_{C^4}$ by the same reasoning as for \eqref{e.smoothbds}. Then again applying Lemma \ref{lem.derivuNspacediff2}, with this choice of $g$ and with $Y=X_{\lfloor nt\rfloor+1}n^{-1/2}$, we obtain 
\[
		\left|\E{\frac{\partial^{2}}{\partial x_{i} \partial x_{j}}u_{n}(x+X_{\lfloor nt\rfloor+1}\xmesh, t) - \frac{\partial^{2}}{\partial x_{i} \partial x_{j}}u_{n}(x,t)}\right| \leq \norm{f}_{C^{4}}\tr(\Sigma)/(2n)\, .
\]
Next, since $u_n(x,t+\tmesh)=\E{u_n(x+X_{\lfloor nt\rfloor+1}\xmesh,t)}$ by \eqref{e.uNscheme}, by differentiating under the integral sign we have 
\[
D^2u_n(x,t+\tmesh)-D^{2}u_n(x,t)
 = \E{D^2 u_n(x+X_{\lfloor nt\rfloor+1}\xmesh,t)-D^{2}u_n(x,t)}\, ;
\]
 the right-hand side is a matrix with 
$\E{\tfrac{\partial^2}{\partial x_i\partial x_j} (u_n(x+X_{\lfloor nt\rfloor+1}n^{-1/2},t)-u_n(x,t))}$ as its $(i,j)$ entry. (Differentiation under the integral sign  is justified by \eqref{e.smoothbds} and the assumption that $\norm{f}_{C^{4}}<\infty$.) 
By the preceding displayed equation, the entries of this matrix are all at most $\|f\|_{C^4}\tr(\Sigma)/(2n)$ in absolute value; since the trace is the sum of the diagonal entries, the second bound of the corollary follows. 

For the last bound, letting $\xi$ and $\xi'$ be independent and $\mathcal{N}(0,\Sigma)$-distributed, then $h^{1/2}\xi+t^{1/2}\xi'$ has the same distribution as $(h+t)^{1/2}\xi$. Since $f$ is bounded, we may thus apply Fubini's theorem to obtain 
\begin{align*}
\E{u(x+h^{1/2}\xi',t)} & 
= \E{f(x+h^{1/2}\xi'+t^{1/2} \xi)}
= \E{f(x+(h\!+\!t)^{1/2}\xi')} 
 = u(x,t\!+\!h)
\end{align*}
The final bound of the corollary then follows from Lemma~\ref{lem.derivuNspacediff2} applied with $g(x)=u(x,t)$ and $Y=h^{1/2}\xi'$, 
which has covariance matrix $h\Sigma$.
\end{proof}
We conclude this section with a lemma which allows us to formalize the Taylor expansion argument from the introduction. We also provide a quantitative version in the case when $X_i$ have a $2+\gamma$ moment.
\begin{lemma} \label{lem.uNspacediff}
Fix $f \in C^3(\R^d)$ and write $u_n=u_{n,f}$. Fix $T > 0$ and define 
\begin{equation*}
\eps_n = \sup_{(x,t)\in \R^{d} \times [0,T]}\bigg|\E{\frac{u_{n}(x + X_{\lfloor nt\rfloor+1}\xmesh , t) - u_{n}(x,t)}{n^{-1}}}
 - \frac{1}{2}\tr(\Sigma D^{2}u_{n}(x,t)) \bigg|. 
\end{equation*}
Then $\eps_n \to 0$ as $n \to \infty$, and there exists $C =C(d)>0$ such that for all $\gamma \in (0,1]$, it holds that $\e_{n} \leq C\norm{f}_{C^{3}}\E{|X_1|^{2+\ga}}n^{-\ga/2}$.
\end{lemma}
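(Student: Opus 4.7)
The approach is a second-order Taylor expansion. Setting $Y := X_{\lfloor nt\rfloor+1}n^{-1/2}$, note $Y \eqdist X_1 n^{-1/2}$, so $\E{Y} = 0$ and $\cov(Y) = n^{-1}\Sigma$. By the integral form of Taylor's remainder,
\[
u_n(x+Y,t) - u_n(x,t) - \langle Du_n(x,t), Y\rangle - \tfrac{1}{2}\langle D^2 u_n(x,t) Y, Y\rangle =: R_n(x,t;Y),
\]
where $R_n(x,t;Y) = \int_0^1 (1-s)\langle (D^2 u_n(x+sY, t) - D^2 u_n(x,t)) Y, Y\rangle \, ds$. Taking expectations, the linear term vanishes since $\E{Y}=0$, and the quadratic term evaluates to $\tfrac{1}{2n}\tr(\Sigma D^2 u_n(x,t))$; hence $\eps_n = \sup_{(x,t)}|n\,\E{R_n(x,t;Y)}|$, and the task reduces to bounding this quantity.

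Two uniform-in-$(x,t)$ bounds on $|R_n|$ will be used. The \emph{trivial} bound uses $\|D^2u_n(\cdot,t)\|_\infty \le \|f\|_{C^2}$ (from \eqref{e.smoothbds}) to give $|R_n(x,t;Y)| \le C(d)\|f\|_{C^2}|Y|^2$. The \emph{refined} bound uses $f \in C^3$: applying \eqref{e.smoothbds} entry-wise to the second partials of $u_n$ shows that $D^2u_n(\cdot,t)$ is Lipschitz with constant $\le C(d)\|f\|_{C^3}$, uniformly in $t$, which yields $|R_n(x,t;Y)| \le C(d)\|f\|_{C^3}|Y|^3$. Both bounds hold uniformly in $(x,t) \in \R^d\times[0,T]$.

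For the quantitative claim, a case analysis on whether $|Y|\le 1$ shows $\min(|Y|^2,|Y|^3)\le|Y|^{2+\gamma}$ for every $\gamma\in[0,1]$; combining the two bounds then gives $|R_n(x,t;Y)| \le C(d)\|f\|_{C^3}|Y|^{2+\gamma}$, so
\[
n|\E{R_n(x,t;Y)}| \le C(d)\|f\|_{C^3}\,n\,\E{|Y|^{2+\gamma}} = C(d)\|f\|_{C^3}\,n^{-\gamma/2}\,\E{|X_1|^{2+\gamma}}.
\]
For the qualitative claim (assuming only $\E{|X_1|^2}<\infty$), I would split the expectation on $\{|Y|\le\delta\}$ using the refined bound and on $\{|Y|>\delta\}$ using the trivial one, and use $n\,\E{|Y|^2}=\tr(\Sigma)$ to obtain
\[
n|\E{R_n(x,t;Y)}| \le C(d)\|f\|_{C^3}\bigl(\delta\,\tr(\Sigma)+\E{|X_1|^2\,\I{|X_1|>\delta\sqrt{n}}}\bigr).
\]
Given $\epsilon>0$, choose $\delta$ small enough that the first term is less than $\epsilon/2$, then $n$ large enough (by dominated convergence, using $\E{|X_1|^2}<\infty$) that the second is less than $\epsilon/2$. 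The main technical point will be uniformity over $(x,t)\in\R^d\times[0,T]$: the $t$-uniformity is automatic since $X_{\lfloor nt\rfloor+1}\eqdist X_1$, and the $x$-uniformity follows because the Lipschitz constant of $D^2u_n(\cdot,t)$ coming from \eqref{e.smoothbds} is independent of both $x$ and $t$.
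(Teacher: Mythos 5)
Your proof is correct and follows the same basic strategy as the paper: expand $u_n$ to second order via Taylor's theorem, observe that the first-order term vanishes in expectation because $\E{X_1}=0$ so that the error $\eps_n$ is controlled by the (suitably scaled) expected remainder, and then exploit the two remainder bounds $|R_n|\lesssim \|f\|_{C^2}|Y|^2$ and $|R_n|\lesssim \|f\|_{C^3}|Y|^3$, noting that their minimum is bounded by $|Y|^{2+\gamma}$, to conclude the quantitative claim. The one place you diverge is in the qualitative claim $\eps_n\to 0$ under the bare assumption $\E{|X_1|^2}<\infty$: you split the expectation on the event $\{|Y|\le\delta\}$ (where the cubic bound gives an extra factor $\delta$) versus $\{|Y|>\delta\}$ (where the quadratic bound leaves a tail term $\E{|X_1|^2\I{|X_1|>\delta\sqrt n}}$ that vanishes by dominated convergence), while the paper instead selects a near-optimizing sequence $(x_n^\delta,t_n^\delta)$, dominates by $C\|f\|_{C^3}|X_1|^2\in L^1$, and applies reverse Fatou together with $|X_1|^3 n^{-1/2}\to 0$ a.s. Both arguments are standard; your truncation version has the small advantage of avoiding the auxiliary near-optimizer sequence and arguably makes the uniformity in $(x,t)$ more transparent, since the splitting is performed on a single uniform bound rather than along a chosen sequence of points.
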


%

\begin{proof}[Proof of Lemma \ref{lem.uNspacediff}]
Define $g:\R^d\times\R^d\times [0,T]\to \R$ by 
\begin{equation}\label{e.uNspacediffexpand}
g(h,x,t)=	u_{n}(x + h,t) - u_{n}(x,t) - \langle Du_n(x,t),h \rangle - \frac{1}{2}\langle D^{2}u_{n}(x,t)h,h\rangle. 
\end{equation}
By Taylor's theorem and \eqref{e.smoothbds}, there exists $C=C(d)$ such that for every $(x,t)\in\R^{d}\times[0,T]$,
	\begin{equation} \label{e.gproperties}
	|g(h,x,t)|
	\leq C\|u_n\|_{C^3}(|h|^{3}\wedge |h|^{2})
	\leq C\norm{f}_{C^{3}}(|h|^{3}\wedge |h|^{2})
	\leq C\norm{f}_{C^3}(|h|^{2+\gamma}\wedge |h|^2).
	\end{equation}
	
	In this paragraph, it is convenient to write $X=X_{\lfloor nt\rfloor+1}$ for simplicity. 
	Substituting $h =X\xmesh$ into \eqref{e.uNspacediffexpand} yields 
\begin{align*}
g(Xn^{-1/2}&,x,t)\\
& = 
u_{n}(x+X\xmesh, t) - u_{n}(x,t)
-
\frac{\langle Du_{n}(x,t), X\rangle}{n^{1/2}} - \frac{\langle D^2 u_{n}(x,t)X, X\rangle}{2n}.
\end{align*}
Note that $\EE[\langle Du_{n}(x,t), X\rangle]=0$ since $\EE[X]=0$. Dividing both sides by $n^{-1}$ and taking expectations, we thus obtain
\begin{align*}
\E{\frac{g(Xn^{-1/2},x,t)}{n^{-1}}}
&=
\E{\frac{u_{n}(x+X\xmesh, t) - u_{n}(x,t)}{n^{-1}}}
 - \E{\frac{\langle D^2 u_{n}(x,t)X, X\rangle}{2}}\\
&=\E{\frac{u_{n}(x+X\xmesh, t) - u_{n}(x,t)}{n^{-1}}}
- \frac{1}{2}\tr(\Sigma D^{2}u_{n}(x,t)),
\end{align*}
and this equality and the fact that $X\eqdist X_1$ together imply that 
\begin{equation}\label{eq:error}
		\e_{n} \leq \sup_{(x,t)\in \R^{d} \times [0,T]}\EE\bigg[\left|\frac{g(X_1\xmesh,x,t)}{n^{-1}}\right|\bigg]. 
\end{equation}
If $\E{|X_1|^{2+\gamma}}<\infty$ for some value $\gamma \in (0,1]$, then using that $|g(h,x,t)|\le C\|f\|_{C^3}|h|^{2+\gamma}$ from \eqref{e.gproperties}, the bound \eqref{eq:error} yields that 
\[
	\eps_n	\leq \sup_{(x,t)\in \R^{d} \times [0,T]} \E{C\norm{f}_{C^{3}}|X_1|^{2+\ga}n^{-\ga/2}} 
	= C\norm{f}_{C^3}\E{|X_1|^{2+\ga}}n^{-\ga/2}\,,
\]
and this yields the second claim of the result.


To complete the proof of the first claim, it suffices to show that in all cases 
	\begin{equation} \label{e.glimit}
		\lim_{n\to\infty}\sup_{(x,t)\in \R^{d} \times [0,T]}\EE\bigg[\left|\frac{g(X_1\xmesh,x,t)}{n^{-1}}\right|\bigg] = 0.
	\end{equation}
	To this end, fix $\de > 0$ and choose a sequence $((x_{n}^{\de},t_{n}^{\de}),n\geq 1)$ of elements of $\R^d \times [0,T]$ such that for all $n \ge 1$, 
	\begin{equation*}
		\sup_{(x,t)\in \R^{d} \times [0,T]}\EE\bigg[\left|\frac{g(X_1\xmesh,x,t)}{n^{-1}}\right|\bigg]\leq  \E{\left|\frac{g(X_1\xmesh,x_{n}^{\de},t_{n}^{\de})}{n^{-1}}\right|} + \de.
	\end{equation*}
Note that by the bound $|g(h,x,t)| \le C\|f\|_{C^3}|h|^{2}$ from \eqref{e.gproperties}, we have
	\begin{equation*}
		\left|\frac{g(X_1\xmesh,x_{n}^{\de},t_{n}^{\de})}{n^{-1}}\right| 
		\leq \frac{C\|f\|_{C^3}|X_1\xmesh|^{2}}{n^{-1}}
		= C\norm{f}_{C^{3}}|X_1|^{2}.
	\end{equation*}
	Because $\E{|X_1|^{2}} < \infty$, we may thus apply the dominated convergence theorem,  then use the bound $|g(h,x,t)| \le C\norm{f}_{C^{3}}|h|^{3}$ from \eqref{e.gproperties}, to obtain
	\begin{align*}
		\lim_{n\to\infty}\sup_{(x,t)\in \R^{d} \times \llb 2n \rrb}\EE\bigg[\left|\frac{g(X_1\xmesh,x,t)}{n^{-1}}\right|\bigg] &\leq \limsup_{n\to\infty} \EE\bigg[\left|\frac{g(X_1\xmesh,x_{n}^{\de},t_{n}^{\de})}{n^{-1}}\right|\bigg] + \de\\
		&\leq\E{\limsup_{n\to\infty}\left|\frac{g(X_1\xmesh,x_{n}^{\de},t_{n}^{\de})}{n^{-1}}\right|} +\de \\
		&\leq \E{\limsup_{n\to\infty}C\norm{f}_{C^{3}}|X_1|^{3}{n^{-1/2}}} +\de\\
		&=\de,
	\end{align*}
	where for the last equality we used that $|X_1|^{3}\xmesh  \xrightarrow{a.s.} 0$. Since $\delta > 0$ was arbitrary, this establishes \eqref{e.glimit} and completes the proof.
\end{proof}

\section{Doubling of variables}\label{s.bigprop}
In this section, we carry out the doubling of variables argument discussed in the introduction. The following proposition almost establishes Theorem~\ref{t.clt}, but for a slightly more restrictive class of test functions $f$.
\begin{proposition} \label{prop.CLargument}
Fix $f\in C^4(\R^d)$ with $|f(x)| \to 0$ as $|x| \to \infty$. Then writing $u_n=u_{n,f}$ and $u=u_f$, we have 
	\begin{equation} \label{e.convergence}
		\sup_{(x,t) \in \R^{d}\times[0,2]}|u_{n}(x,t) - u(x,t)| \to 0 \text{ as } n \to \infty. 
	\end{equation}
Moreover, there exists $C=C(d,\|f\|_{C^4},\lambda_{\max}(\Sigma))$ such that for all $\gamma \in (0,1]$
	\[
	\sup_{(x,t) \in \R^{d}\times[0,2]}|u_{n}(x,t) - u(x,t)| \le C \E{|X_1|^{2+\gamma}} n^{-\gamma/2}\, .
	\]
	
	\end{proposition}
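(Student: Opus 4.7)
My plan is to adapt the maximum-principle argument from the proof of Proposition~\ref{eq:heat_sol_comp} by doubling the time variable, thereby bridging the discrete-in-$t$ scheme $u_n$ and the continuous-in-$s$ solution $u$. With parameters $c_n, C_n > 0$ to be chosen at the end, I will introduce
\[
\Phi_n(x,t,s) := u_n(x,t) - u(x,s) - c_n(t+s) - C_n(t-s)^2
\]
on $\R^d \times \{k/n : 0 \le k \le \lfloor 2n \rfloor\} \times [0,2]$. The linear penalty plays the role of $-t\sigma/(2T)$ in Proposition~\ref{eq:heat_sol_comp} and will push the maximizer of $\Phi_n$ away from the lower temporal boundaries $\{t=0\}\cup\{s=0\}$; the quadratic penalty localizes the maximizer near the diagonal $\{t=s\}$, so that discrete time-steps of $u_n$ can be meaningfully compared with time derivatives of $u$. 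Writing $\tilde\sigma_n := \sup_{x,k}(u_n(x,k/n)-u(x,k/n))_+$, plugging $(x_0, k_0/n, k_0/n)$ into $\Phi_n$ shows $\sup \Phi_n \ge \tilde\sigma_n - 4c_n - \delta$, and the vanishing-at-infinity of $f$ (inherited by $u$ and $u_n$ via dominated convergence) ensures that $\Phi_n$ attains its supremum at some $(x^*, k^*/n, s^*)$.

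The next step is to rule out the lower boundaries $k^*=0$ and $s^*=0$. Using the last bound of Corollary~\ref{cor:derivuNspacediff} for $u$ and iterating its first bound for $u_n$, one obtains $|u(x,s)-f(x)|, |u_n(x,t)-f(x)| \le tK_1$ with $K_1 := \|f\|_{C^2}\tr(\Sigma)/2$; optimizing $sK_1 - c_ns - C_ns^2$ in $s$ then bounds $\Phi_n(x,0,s)$ and $\Phi_n(x,t,0)$ by $K_1^2/(4C_n)$, so the lower boundaries are ruled out whenever $\tilde\sigma_n > 4c_n + K_1^2/(4C_n)$. The upper boundaries cause no trouble because only a backward difference in $t$ and the one-sided condition $\partial_s\Phi_n\le 0$ in $s$ are needed below. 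In the interior (or upper-boundary) case, combining $\Phi_n(x^*, (k^*-1)/n, s^*) \le \Phi_n(x^*, k^*/n, s^*)$ with Lemma~\ref{lem.uNspacediff} applied at time $(k^*-1)/n$, and then invoking the entry-wise bound on $D^2 u_n(\cdot, k^*/n)-D^2 u_n(\cdot, (k^*-1)/n)$ from the proof of Corollary~\ref{cor:derivuNspacediff} (to replace the Hessian of $u_n$ at $(k^*-1)/n$ by the one at $k^*/n$ at cost $K_2/n$, with $K_2=K_2(d,\|f\|_{C^4},\lambda_{\max}(\Sigma))$), yields
\[
\tfrac{1}{2}\tr(\Sigma D^2 u_n(x^*, k^*/n)) \ge c_n + 2C_n(k^*/n - s^*) - C_n/n - K_2/n - \eps_n.
\]
Meanwhile the one-sided first-order condition $\partial_s\Phi_n(x^*, k^*/n, s^*) \le 0$, combined with the heat equation $\partial_s u = \tfrac{1}{2}\tr(\Sigma D^2 u)$, gives
\[
\tfrac{1}{2}\tr(\Sigma D^2 u(x^*, s^*)) \le -c_n + 2C_n(k^*/n - s^*).
\]
Subtracting the second from the first and applying the spatial Hessian inequality at $x^*$ (which, together with $\Sigma \succeq 0$, gives $\tr(\Sigma[D^2 u_n(x^*, k^*/n) - D^2 u(x^*, s^*)]) \le 0$) produces $2c_n \le C_n/n + K_2/n + \eps_n$; this is a contradiction whenever $c_n$ is chosen strictly larger than half the right-hand side.

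Setting $c_n := C_n/n + K_2/n + \eps_n$ and optimizing $C_n$ against the resulting barrier $4c_n + K_1^2/(4C_n)$ (the minimum is at $C_n \asymp K_1\sqrt{n}$) will give $\tilde\sigma_n = O(K_1/\sqrt{n} + K_2/n + \eps_n)$, and the passage from $\tilde\sigma_n$ to the continuous-time supremum only costs an additional $K_1/n$ via the modulus of continuity of $u$ in $t$. The quantitative bound $\eps_n \le C\|f\|_{C^3}\E{|X_1|^{2+\gamma}}n^{-\gamma/2}$ from Lemma~\ref{lem.uNspacediff}, together with the Jensen lower bound $\E{|X_1|^{2+\gamma}} \ge \lambda_{\max}(\Sigma)^{(2+\gamma)/2}$ (valid by non-degeneracy of $\Sigma$), absorbs the $K_1/\sqrt{n}$ and $K_2/n$ contributions into a constant $C(d,\|f\|_{C^4},\lambda_{\max}(\Sigma))$; running the symmetric argument on $\Phi_n(x,t,s) := u(x,s)-u_n(x,t) - c_n(t+s) - C_n(t-s)^2$ controls $\sup(u-u_n)_+$ and completes the quantitative claim, while the qualitative convergence \eqref{e.convergence} follows from the same construction using only $\eps_n \to 0$. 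The main obstacle will be the careful bookkeeping needed to balance the three error sources---the discrete-scheme residual $\eps_n$, the boundary barrier $K_1^2/(4C_n)$, and the time-regularity gap $K_2/n$---so that the optimal choice of $c_n$ and $C_n$ reconstructs the sharp $n^{-\gamma/2}$ rate.
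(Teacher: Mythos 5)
Your argument is correct, and it is the same overall doubling-of-variables strategy as the paper---identical comparison function $\Phi_n(x,t,s)=u_n(x,t)-u(x,s)-c_n(t+s)-C_n(t-s)^2$, the same consistency estimate from Lemma~\ref{lem.uNspacediff}, the same use of Corollary~\ref{cor:derivuNspacediff} for time-moduli and Hessian increments, and the same spatial step $\tr\bigl(\Sigma[D^2u_n-D^2u]\bigr)\le 0$ at the maximizer. The endgame, however, is organized differently. The paper fixes $c_n=\sigma_n/8$ and $C_n=2\|f\|_\infty n^{1/2}$ from the outset, shows via Lemma~\ref{lem:supremum_info} that any maximizer satisfies $|k_0/n-s_0|=O(n^{-1/2})$, and then runs a three-way case analysis on $(k_0,s_0)$ (both positive; $k_0=0$; $s_0=0$), bounding $\sigma_n$ directly in each case. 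You keep $c_n,C_n$ as free parameters, dispose of both lower time boundaries at once via the elementary bound $\sup_{s\ge 0}\bigl(sK_1-C_ns^2\bigr)=K_1^2/(4C_n)$, extract a single necessary interior condition $2c_n\le C_n/n+K_2/n+\eps_n$, and then choose $c_n$ to violate it and optimize $C_n\asymp K_1\sqrt n$. This avoids the case split and makes the role of each penalty term explicit; the price is the additional Jensen observation $\E{|X_1|^{2+\gamma}}\ge(\tr\Sigma)^{(2+\gamma)/2}\ge\lambda_{\max}(\Sigma)^{(2+\gamma)/2}$, which you correctly invoke to fold the $n^{-1/2}$ and $n^{-1}$ terms into a constant depending only on $(d,\|f\|_{C^4},\lambda_{\max}(\Sigma))$.

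One small sign slip: having excluded $s^*=0$, the first-order condition in $s$ at a maximizer with $s^*\in(0,2]$ is $\partial_s\Phi_n(x^*,k^*/n,s^*)\ge 0$ (with equality for $s^*<2$), not $\le 0$; the inequality you then write down for $\tfrac12\tr(\Sigma D^2u(x^*,s^*))$ is exactly what follows from $\ge 0$, so the deduction and the conclusion are correct and only the stated direction of the condition needs flipping. The same remark applies to your parenthetical about the upper boundary $s^*=2$.
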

Recall that we write $\llb n \rrb = \{0,1,\ldots,n\}$ for $n \in \ZZ_+$. Let 
\[
\sig_{n} = \sup_{(x,k) \in \R^{d}\times\llb 2n \rrb}(u_{n}(x,k\tmesh) - u(x,k\tmesh))_+\, ,
\] 
and note that $\sig_n \le \|u_n\|_\infty+\|u\|_\infty \le 2 \|f\|_\infty$. 
Let $c_n=\sigma_n/8$, let $C_n=2\|f\|_\infty n^{1/2}$, and define
$\phi_{n}: \R^{d}\times \llb 2n \rrb\times [0,2] \to \R$ by 
\begin{equation*}
\phi_n(x,k,s) = \\ u_{n}(x,k\tmesh) - u(x,s) - c_n(k\tmesh + s) -C_n(k\tmesh-s)^2\, .
\end{equation*}

The following lemma will be used in the proof.
\begin{lemma}\label{lem:supremum_info}
For $\phi_{n}$ defined above, $\sup_{(x,k,s) \in \R^d\times \llb 2n\rrb \times [0,2]} \phi_{n}(x,k,s)>\sigma_{n}/2$. Also, there exists $C=C(d,\|f\|_{C^2},\lambda_{\max}(\Sigma))> 0$ such that for all $n$, for any point $(x_{0}, k_{0}, s_{0})\in \R^d\times \llb 2n\rrb \times [0,2]$ such that 
\begin{equation}\label{2eq:supremum_point}
\phi_n(x_0,k_0,s_0)=\sup_{(x,k,s) \in \R^{d}\times \llb 2n \rrb\times [0,2]} \phi_n(x,k,s),
\end{equation}
it holds that $|k_0\tmesh-s_0| \le Cn^{-1/2}$. Finally, if $\sigma_{n}>0$, then there exists a point $(x_0,k_0,s_0) \in \R^d\times \llb 2n\rrb \times [0,2]$ such that the supremum is achieved.  
\end{lemma}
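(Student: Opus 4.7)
The plan is to handle the three assertions in turn. For the lower bound on $\sup \phi_n$ in claim~(1), the diagonal choice $s = k\tmesh$ kills the quadratic penalty. Given $\epsilon > 0$, pick $(x^\epsilon, k^\epsilon) \in \R^d \times \llb 2n \rrb$ with $u_n(x^\epsilon, k^\epsilon\tmesh) - u(x^\epsilon, k^\epsilon\tmesh) > \sigma_n - \epsilon$, and evaluate $\phi_n$ at $(x^\epsilon, k^\epsilon, k^\epsilon\tmesh)$. The $C_n$-term vanishes and the $c_n$-term is at most $2 c_n k^\epsilon\tmesh \le 4 c_n = \sigma_n/2$, using $c_n = \sigma_n/8$ and $k^\epsilon \le 2n$. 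Hence $\phi_n(x^\epsilon, k^\epsilon, k^\epsilon\tmesh) > \sigma_n/2 - \epsilon$, and letting $\epsilon \downarrow 0$ gives $\sup \phi_n \ge \sigma_n/2$. The strict inequality, when $\sigma_n > 0$, can be recovered by a small perturbation $s = k^\epsilon\tmesh \pm \eta$ whose sign is chosen according to that of $\partial_s u(x^\epsilon, k^\epsilon\tmesh) + c_n$; for $\eta > 0$ small, the linear-in-$\eta$ gain beats the quadratic loss $C_n\eta^2$.

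For claim~(2), fix a maximizer $(x_0, k_0, s_0)$ and view $s \mapsto \phi_n(x_0, k_0, s)$ as a smooth scalar function on $[0, 2]$; its derivative is
\[
\partial_s \phi_n(x_0, k_0, s) = -\partial_s u(x_0, s) - c_n + 2 C_n(k_0\tmesh - s).
\]
Applying first-order optimality at $s_0$ --- vanishing derivative in the interior, the appropriate sign at each boundary point --- yields in all three cases that $2 C_n |k_0\tmesh - s_0| \le c_n + |\partial_s u(x_0, s_0)|$. The right-hand side is bounded by $c_n = \sigma_n/8 \le \|f\|_\infty/4$ together with $|\partial_s u(x_0, s_0)| \le \|f\|_{C^2}\tr(\Sigma)/2$ from Corollary~\ref{cor:derivuNspacediff}. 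Dividing by $2 C_n = 4\|f\|_\infty n^{1/2}$ yields the advertised $O(n^{-1/2})$ bound, with the constant depending on $d$, $\|f\|_{C^2}$ and $\lambda_{\max}(\Sigma)$ through $\tr(\Sigma) \le d\lambda_{\max}(\Sigma)$.

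For claim~(3), assume $\sigma_n > 0$, so that $\sup \phi_n \ge \sigma_n/2 > 0$ by claim~(1). The key step is to show that $\phi_n(x, k, s) < \sigma_n/4$ for all large $|x|$, uniformly in $(k, s) \in \llb 2n \rrb \times [0, 2]$. For any $\delta > 0$, fix $R$ with $|f(y)| < \delta$ for $|y| \ge R$; then splitting $u_n(x, k\tmesh) = \E{f(x + S_k \xmesh)}$ according to whether $|x + S_k \xmesh| \ge R$ gives
\[
|u_n(x, k\tmesh)| \le \delta + \|f\|_\infty \P{|S_k \xmesh| > |x| - R},
\]
and Chebyshev's inequality controls this tail using $\E{|S_k \xmesh|^2} = k\tr(\Sigma)/n \le 2\tr(\Sigma)$, uniformly in $k \in \llb 2n \rrb$. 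The same bound, with $s^{1/2}\xi$ in place of $S_k \xmesh$, handles $|u(x, s)|$. Combined with the lower bound from (1), this confines any approximate maximizer to a bounded set in $x$; continuity of $\phi_n$ in $(x, s)$ (via dominated convergence) and the finiteness of $\llb 2n \rrb$ then produce an actual maximizer. The main obstacle is precisely this \emph{uniform-in-$k$} decay at spatial infinity, since $k$ ranges up to $2n$ and $S_k$ therefore has variance of order $n$; the saving is that after rescaling, $\E{|S_k \xmesh|^2} \le 2\tr(\Sigma)$ uniformly, so Chebyshev's bound does not degrade with $n$.
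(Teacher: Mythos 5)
Your argument is correct and closely tracks the paper's own proof; the one place you genuinely deviate is in the treatment of the boundary case $s_0 = 0$ for the $|k_0 n^{-1} - s_0|$ bound, and it is worth comparing the two. The paper is careful to assert only $u \in C^{2,1}(\R^d\times(0,\infty))$ and therefore does \emph{not} use $\partial_s$ at $s_0 = 0$; instead it compares $\phi_n(x_0,k_0,0)$ with $\phi_n(x_0,k_0-1,0)$, producing a discrete $k$-difference that is controlled by the first bound of Corollary~\ref{cor:derivuNspacediff}. You unify all three boundary cases ($s_0=0$, $s_0 \in (0,2)$, $s_0=2$) via first-order optimality in $s$, obtaining $2C_n|k_0n^{-1}-s_0| \le c_n + |\partial_s u(x_0,s_0)|$ throughout. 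This is cleaner, but at $s_0=0$ you are implicitly invoking a one-sided derivative of $u$ in time that the paper never claims exists; to make your route airtight you should argue with difference quotients instead: for small $s>0$, $\phi_n(x_0,k_0,s)\le\phi_n(x_0,k_0,0)$ rearranges to $2C_n k_0 n^{-1} \le c_n + \frac{u(x_0,s)-u(x_0,0)}{s} + C_n s$, and the final bound of Corollary~\ref{cor:derivuNspacediff} bounds the quotient by $\|f\|_{C^2}\tr(\Sigma)/2$ uniformly as $s\downarrow 0$, with no differentiability needed. With that patch your version of claim~(2) is fine, and arguably tidier than the paper's since it avoids a case split. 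For claim~(3), your Chebyshev computation is exactly the uniform-in-$k$ (and uniform-in-$t$) decay that the paper states without spelling out; it is a welcome detail. One small caution on claim~(1): the lemma as stated asserts strict inequality, but your perturbation $s = k^\epsilon n^{-1}\pm\eta$ need not yield a strict gain if $\partial_s u(x^\epsilon, k^\epsilon n^{-1}) + c_n = 0$, so that argument as sketched has a gap. This is harmless --- the paper's own proof only establishes $\ge\sigma_n/2$ and the main proposition only ever invokes the weak inequality --- but you should not claim strictness without a complete argument.
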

In the coming proofs, for a function $a: \mathbb{N}\to \RR$, we write $a(n)=O(n^{-1/2})$ to mean there exists $C=C(d,\|f\|_{C^2},\lambda_{\max}(\Sigma))>0$ such that $\sup_{n \in \NN} n^{1/2}|a(n)| \le C$.
\begin{proof}
First note that 
\begin{align*}
\sup_{(x,k,s) \in \R^{d}\times \llb 2n \rrb\times [0,2]} 
\phi_n(x,k,s) 
& \ge 
\sup_{(x,k) \in \R^{d}\times \llb 2n \rrb}
\phi_n(x,k,k\tmesh)\\
& = 
\sup_{(x,k) \in \R^{d}\times \llb 2n \rrb}
\left(
u_{n}(x,k\tmesh) - u(x,k\tmesh) - 2c_n k\tmesh \right)\\
& \ge \frac{\sig_n}{2}\, ,
\end{align*}
the last inequality holding by the definition of $\sig_n$ and since for all $k \in \llb 2n \rrb$ we have $2c_nk\tmesh = (\sig_n/8)2k\tmesh \le \sigma_n/2$. 

Now let $(x_0,k_0,s_0)$ be any point achieving the supremum in \eqref{2eq:supremum_point}. We bound $|k_0\tmesh-s_0|$ in cases, depending on the value of $s_0$. 

First suppose that $s_0=0$. If $k_0=0$ then  $s_0=k_0\tmesh$. Otherwise, the fact that $\phi_n(x_0,k_0-1,s_0) \le \phi_n(x_0,k_0,s_0)$ may be rewritten (using that $s_0=0$) as 
\[
u_n(x,(k_0-1)\tmesh)  - C_n ((k_0-1)\tmesh)^2
\le
u_n(x,k_0\tmesh)-c_n \tmesh - C_n (k_0\tmesh)^2\, .
\]
Rearranging this, and using that $(k_0\tmesh)^2-((k_0-1)\tmesh)^2 \ge k_0 n^{-2}$, 
it follows that
\begin{align*}
C_n\cdot  k_0\tmesh & \le \frac{u_n(x,k_0\tmesh)-u_n(x,(k_0-1)\tmesh)}{\tmesh} - c_n  
 \le
\frac{\|f\|_{C^2} \tr(\Sigma)}{2}\, ,
\end{align*}
where for the last inequality we have used Corollary~\ref{cor:derivuNspacediff} and the fact that $c_{n}\geq 0$. It follows that $k_0 \tmesh-s_0=k_0\tmesh \le \|f\|_{C^2}\tr(\Sigma)/(2C_n)=O(n^{-1/2})$, 

Next suppose that $s_0 > 0$. 
Since $u\in C^{2,1}(\R^d\times(0,\infty))$, by the definition of $\phi_n$ we have 
\[
\partial_s\phi_n(x,k,s) = -\partial_s u(x,s) - c_n + 2C_n(k\tmesh-s).
\]
Since $(x_0,k_0,s_0)$ is a maximum of $\phi_n$ on $\R^d\times \llb 2n\rrb \times [0,2]$, it follows that if $s_0 \in (0,2)$ then $\partial_s(\phi_n(x,k,s))|_{(x,k,s)=(x_0,k_0,s_0)} = 0$, and if $s_0=2$ then this partial derivative is at least zero.
Since $\partial_t u = \tfrac12\tr(\Sigma D^2 u)$ in $\R^d\times (0,\infty)$, if $s_0 \in (0,2)$, we obtain that 
\[
2C_n(k_0\tmesh-s_0) = c_n +\tfrac12\tr(\Sigma D^2 u)\, ,
\]
so 
\[
|k_0\tmesh-s_0| \leq \frac{1}{2C_{n}}\cdot \left(c_n+\frac12 |\tr(\Sigma D^2 u)|\right) =O(n^{-1/2})\, .
\]
If instead $s_0=2$, then using that $\partial_s(\phi_n(x,k,s))|_{(x,k,s)=(x_0,k_0,s_0)} \ge 0$ we similarly obtain that 
\[
k_0\tmesh \ge s_0-\frac{1}{2C_{n}}\cdot \left(c_n+\frac12 |\tr(\Sigma D^2 u)|\right)
=s_0-O(n^{-1/2})\, ,
\] 
so since also $k_0\tmesh \le 2=s_0$ we again conclude that $|k_0\tmesh-s_0|=O(n^{-1/2})$. 

To complete the proof of the lemma, suppose $((x^\ell,k^\ell,s^\ell),\ell \ge 1)$ is a sequence of elements of $\R^{d}\times \llb 2n \rrb\times [0,2]$. 
If $x^\ell \to \infty$ as $\ell \to \infty$, then since $|f(x)| \to 0$ as $x \to \infty$, and $u_n(x,k\tmesh)=\E{f(x+S_k\xmesh)}$ and $u(x,t)=\E{f(x+t^{1/2}\xi)}$ where $\xi \sim \normal(0,\Sigma)$, 
it follows that as $\ell \to \infty$,
\[
\sup_{k \in \llb 2n \rrb} |u_n(x^\ell,k\tmesh)| \to 0\quad\mbox{ and }\quad 
\sup_{t \in [0,2]} |u(x^\ell,t)| \to 0.
\]
Thus, whenever $x_\ell \to \infty$ as $\ell \to \infty$, we have 
\begin{equation*}
\limsup_{\ell \to \infty} \phi_n(x^\ell,k^\ell,s^\ell) \le 0.
\end{equation*}
On the other hand, choosing $((x^\ell,k^\ell,s^\ell),\ell \ge 1)$ so that 
\[
\phi_n(x^\ell,k^\ell,s^\ell) \to \sup_{(x,k,s) \in \R^{d}\times \llb 2n \rrb\times [0,2]}
\phi_n(x,k\tmesh,s) \, ,
\]
if $\sigma_n > 0$ then, since the last supremum is at least $\sig_n/2$, it follows that 
\[
\liminf_{\ell \to \infty} \phi_n(x^\ell,k^\ell,s^\ell) > 0,
\]
so $(x^\ell,\ell \ge 1)$ must be a bounded sequence. Since $\llb 2n \rrb$ is finite, $[0,2]$ is compact and $\phi_n$ is continuous in $x$ and $s$, it follows that, after passing to a subsequence, $((x^\ell,k^\ell,s^\ell),\ell \ge 1)$ converges to some $(x_0,k_0,s_0)\in \R^{d}\times \llb 2n \rrb\times [0,2]$ as $\ell \to \infty$ and that the supremum is achieved at the limit. 
\end{proof}

Equipped with this lemma, we are now ready to prove the main result of this section.

\begin{proof}[Proof of Proposition~\ref{prop.CLargument}]

Recall that 
\[
\sig_{n} = \sup_{(x,k) \in \R^{d}\times\llb 2n \rrb}(u_{n}((x,k\tmesh) - u(x,k\tmesh))_+\,
\] 
and let 
$\tilde{\sig}_{n} = \sup_{(x,k) \in \R^{d}\times\llb 2n \rrb}(u(x,k\tmesh) - u_n(x,k\tmesh))_+$, so that 
\[
\sup_{(x,k) \in \R^{d}\times\llb 2n \rrb}|u_{n}(x,k\tmesh) - u(x,k\tmesh)|=\max(\sig_n,\tilde{\sig}_n). 
\]
Since $u_{n}(x, \cdot)$ is constant on time intervals of the form $[kn^{-1}, (k+1)n^{-1})$, we have 
\begin{align*}
\sup_{(x,t) \in \R^d \times[0,2] }
(u_{n}((x,t) - u(x,t))_+
& \le \sigma_n + \sup_{x \in \R^d, |t-s| \le n^{-1}} |u(x,t)-u(x,s)| \\
& \le \sigma_n +\norm{f}_{C^{2}}\tr(\Sigma)/(2n),
\end{align*}
the last inequality holding by the final bound of Corollary~\ref{cor:derivuNspacediff}. It likewise holds that 
\[
\sup_{(x,t) \in \R^d \times[0,2] }
(u(x,t)-u_{n}((x,t))_+
\le \tilde{\sigma}_n+\norm{f}_{C^{2}}\tr(\Sigma)/(2n),
\]
so to prove the proposition, it suffices to show that $\limsup_{n \to \infty} \sig_n = 0$ and $\limsup_{n \to \infty} \tilde{\sig}_n= 0$ (or equivalently for the quantitative estimate, to show $\max\left(\sigma_{n}, \tilde{\sig}_{n}\right)\leq C \E{|X_1|^{2+\gamma}} n^{-\gamma/2}$). We prove this only for $\sig_n$ as the bound for $\tilde{\sig}_n$ follows by a symmetric argument. 

Fix $n$ with $\sigma_n > 0$; by Lemma~\ref{lem:supremum_info}, there exists $(x_0,k_0,s_0) \in \R^d\times \llb 2n\rrb \times [0,2]$ such that 
$\phi_n(x_0,k_0,s_0)=\sup \phi_n(x,k,s)$. The proof breaks into cases depending on where the supremum is achieved. 
\smallskip

\noindent {\emph{{\color{blue} Case 1}: $k_0 > 0, s_0 > 0$.}}

By the definition of $\phi_n$, and since $u(x,s)$ is smooth for $s > 0$, we have 
\[
\partial_s\phi_n(x,k,s) = -\partial_s u(x,s) - c_n + 2C_n(k\tmesh-s).
\]
Since $(x_0,k_0,s_0)$ is a maximum of $\phi_n$ on $\R^d\times \llb 2n\rrb \times [0,2]$, by the same reasoning as in the prior lemma, we conclude that
\begin{equation}\label{2eq:perturb_s_bound}
c_n \le 2C_n(k_0\tmesh-s_0) - \frac{1}{2}\tr(\Sigma D^{2}u(x_0,s_0)).
\end{equation}

We now perform a similar ``maximizing'' argument for $k_0$; the fact that $\phi_n(x_0,k_0,s_0) \ge \phi_{n}(x_0,k_{0}-1, s_0)$ may be rewritten as
\begin{multline*}
u_{n}(x_{0},k_0\tmesh) - c_n(k_0\tmesh) - C_n(k_0\tmesh-s_0)^2\\
 \ge 
 u_{n}(x_{0},(k_{0}-1)\tmesh) - c_n((k_{0}-1)\tmesh) - C_n((k_{0}-1)\tmesh-s_0)^2\, .
\end{multline*}
Rearranging and dividing by $\tmesh$ yields
\begin{align} \label{2eq:perturb_k_bound}
c_n
& \le 
\frac{u_{n}(x_{0},k_0\tmesh)-u_{n}(x_{0},(k_0-1)\tmesh)}{\tmesh} \notag \\
& \quad + C_n\frac{1}{\tmesh}\bigl(((k_0-1) \tmesh-s_0)^2-(k_0\tmesh-s_0)^2\bigr) \notag\\
& = 
\frac{u_{n}(x_{0},k_0\tmesh)-u_{n}(x_{0},(k_0-1)\tmesh)}{\tmesh} 
-2C_n (k_0\tmesh-s_0) + \frac{C_n}{n}\, .
\end{align}
Adding this inequality to \eqref{2eq:perturb_s_bound} and using that $2c_n=\sigma_n/4$, we obtain that 
\[
\frac{\sigma_n}{4} \le 
\frac{u_{n}(x_{0},k_0\tmesh)-u_{n}(x_{0},(k_0-1)\tmesh)}{\tmesh} 
-\frac{1}{2}\tr(\Sigma D^{2}u(x_{0},s_0)) + \frac{C_n}{n}\, .
\]
Using the equality $u_n(x,k\tmesh)=\E{u_n(x+X\xmesh,(k-1)\tmesh)}$ and 
applying Lemma~\ref{lem.uNspacediff}, we have that 
\[
\left|
\frac{u_{n}(x_{0},k_0\tmesh)-u_{n}(x_{0},(k_0-1)\tmesh)}{\tmesh}
- \frac{1}{2}\tr(\Sigma D^2 u_n(x_{0},(k_0-1)\tmesh))
\right| \le \eps_n,
\]
so 
\begin{equation}\label{2eq:sigman_bd1}
\frac{\sigma_n}{4} \le 
\frac{1}{2}\left(\tr(\Sigma D^2 u_n(x_{0},(k_0-1)\tmesh))-\tr(\Sigma D^{2}u(x_0,s_0))\right) + \frac{C_n}{n}+\eps_n\, .
\end{equation}
To bound the difference of traces, we first note that since $\phi_n$ is maximized at $(x_0,k_0,s_0)$, its Hessian is non-positive semidefinite at this point: 
\[
D^2 \phi_n(x_0,k_0,s_0) = D^2u_n(x_0,k_0 \tmesh)-D^{2}u(x_0,s_0) \preceq 0. 
\]
Since $\Sigma$ is a covariance matrix, it is positive semidefinite, and therefore $\Sigma \big(D^2u_n(x_0,k_0 \tmesh)-D^2u(x_0,s_0)\big) \preceq 0$, and so the linearity of the trace implies 
\begin{equation*}
\tr\bigl(\Sigma D^2 u_n(x_0,k_0 \tmesh)\bigr) \leq \tr\bigl(\Sigma D^2u(x_0,s_0)\bigr)\,.
\end{equation*}
Therefore,
\begin{align*}
\tr\big(\Sigma D^2 u_n&(x_0,(k_0-1)\tmesh)\big)-\tr\big(\Sigma D^{2}u(x_0,s_0)\big)\\
&
\le 
\tr\big( \Sigma \big(D^2u_n(x_0,(k_0-1) \tmesh)-D^{2}u_n(x_0,k_0 \tmesh)\big)\big)\\
& \le \lambda_{\max}(\Sigma) \left|\tr \Bigl(D^2u_n(x_0,(k_0-1)\tmesh)-D^{2}u_n(x_0,k_0 \tmesh)\Bigr)\right|\\
& \le 
\lambda_{\max}(\Sigma)\cdot d \cdot \|f\|_{C^4} \tr(\Sigma)/(2n)\\
& \leq d^{2}\cdot \lambda_{\max}(\Sigma)^{2}\cdot \|f\|_{C^4}/(2n)\, ,
\end{align*}
where in the second-to-last line we have used the second bound of Corollary~\ref{cor:derivuNspacediff}, and in the last line we have used that $\tr(\Sigma) \le d\lambda_{\max}(\Sigma)$. 
Combining this with \eqref{2eq:sigman_bd1}, we obtain that 
\[
\frac{\sigma_n}{4} \le 
\frac{d^2\lambda_{\max}(\Sigma)^2 \|f\|_{C^4}}{4n} 
 + \frac{C_n}{n}+\eps_n\, .
\]
Since $C_n=2\|f\|_{\infty} n^{1/2}$, the right-hand side tends to $0$ as $n \to \infty$; also, by the bound on $\eps_n$ from Lemma~\ref{lem.uNspacediff}, for any $\gamma \in (0,1]$ the right hand-side is at most $C\E{|X_1|^{2+\gamma}||}n^{-\gamma/2}$ for some constant $C=C(d,\|f\|_{C^4},\lambda_{\max}(\Sigma))$. (For this we are using that $\|f\|_{C^k}$ is increasing in $k$ and that $\|f\|_{C^0}=\|f\|_{\infty}$.)
\smallskip


\noindent {\emph{{\color{blue} Case 2}: $k_0 = 0, s_0 \geq 0$.}}
Recall from Lemma~\ref{lem:supremum_info} that $\phi_n(x_0,k_0,s_0) \ge \sig_n/2$. Using the definition of $\phi_n$, we thus have 
\begin{align*}
\sig_n/2 
\le \phi_n(x_0,0,s_0) \le u_n(x_0,0)-u(x_0,s_0) &= f(x_0)-\EE[f(x_0+s_0^{1/2}\xi)]\\
& \le \left|\E{f(x_0+s_0^{1/2}\xi)-f(x_0)}\right|. 
\end{align*}
By the last bound of Corollary~\ref{cor:derivuNspacediff}, we have
\[
\Big|\E{f(x_0+s_0^{1/2}\xi)-f(x_0)}\Big|
= |u(x_0,s_0)-u(x_0,0)|
 \le s_0\|f\|_{C^2} \tr(\Sigma)/2\, ,
\]
which combined with the preceding inequality yields that 
\[
\sig_n \le s_0 \|f\|_{C^2}\tr(\Sigma)\, .
\]
By Lemma~\ref{lem:supremum_info}, we have 
$s_0=|s_0-k_0n^{-1}|=O(n^{-1/2})$, so the preceding bound implies that $\sig_n=O(n^{-1/2})$.
\smallskip

\noindent {\emph{{\color{blue} Case 3}: $k_0 > 0, s_0 = 0$.}} The argument is similar to that from Case 2. Using that $\phi_n(x_0,k_0,s_0) \ge \sig_n/2$ and the definition of $\phi_n$ gives us 
\begin{align*}
\sig_n/2 \le \phi_n(x_0,k_0,0)&\le u_n(x_0,k_0\tmesh)-u(x_0,0)  \\
& = u_n(x_0,k_0\tmesh)-u_n(x_0,0) \\
& \le 
k_0 \max_{1 \le k \le k_0}|u_n(x_0,k\tmesh)-u_n(x_0,(k-1)\tmesh)|\,.
\end{align*}
Using that $u_n(x,k\tmesh)=\E{u_n(x+X\xmesh,(k-1)\tmesh)}$ and 
applying Corollary~\ref{cor:derivuNspacediff}, it follows that for all $1 \le k \le k_0$, 
\begin{equation} \label{2eq:uN_timestep_diff}
	|u_n(x_0,k\tmesh)-u_n(x_0,(k-1)\tmesh)| \le \|f\|_{C^{2}}\tr(\Sigma)/(2n)\, ,
\end{equation}
which together with the previous bound implies that
\[
\sig_n \le k_0\|f\|_{C^{2}}\tr(\Sigma)n^{-1}.
\]
By Lemma~\ref{lem:supremum_info} we have that 
$k_0 n^{-1}=|k_0n^{-1}-s_0|=O(n^{-1/2})$, so this bound again yields that $\sig_n=O(n^{-1/2})$.
\end{proof}
Theorem~\ref{t.prohorov} is an immediate consequence of the second bound of Proposition~\ref{prop.CLargument}, so all that remains for us to do is to deduce Theorem~\ref{t.clt} from the proposition. 


\section{Proof of Theorem~\ref{t.clt}}
For any bounded continuous function $f:\R^d \to \R$, there exists an approximating sequence of functions $f^{(k)}:\R^d \to \R$, all bounded and uniformly continuous, with compact support and with $\|f^{(k)}\|_{\infty} \le \|f\|_{\infty}$, such that $f^{(k)}$ and $f$ agree on $[-k,k]^d$. For such functions, we have 
\begin{align*}
\EE\left[|f(S_n n^{-1/2})-f^{(k)}(S_n n^{-1/2})|\right]
& 
\le 2\|f\|_\infty \P{|S_n|_\infty \ge kn^{1/2}}\, ,
\end{align*}
where for $x\in \RR^{d}$, $|x|_{\infty}=\max_{1\leq i\leq d} |x_{i}|$, so by Chebyshev's inequality,
\[
\limsup_{k\to \infty} \limsup_{n \to \infty}
\big|\E{f(S_n n^{-1/2})}-\E{f^{(k)}(S_n n^{-1/2})}\big|=0\, .
\]
To prove Theorem~\ref{t.clt} it thus suffices to prove that for all bounded, uniformly continuous $f:\R^d \to \R$ with compact support, we have $\EE[f(S_n n^{-1/2})] \to \EE[f(\xi)]$. For the remainder of the proof, we fix such a function $f$; we will use the smoothing effect of convolution with a Gaussian in order to be able to apply Proposition~\ref{prop.CLargument}.

Write  $u=u_f$ and $u_n=u_{n,f}$, like in the rest of the paper. 
Using $\eta_t$ for the probability density function of $t^{1/2}\xi$, then 
\[
u(x,t) = \int_{\R^d} f(x + t^{1/2}y) \eta_t(y)dy\, ;
\]
since $\eta_t \in C^{\infty}(\R^d)$ and $f$ is bounded, it follows by differentiation under the integral sign that $u(\cdot,t) \in C^{\infty}(\R^d)$ for all $t > 0$. Since $f$ has compact support and $\eta_t(y) \to 0$ as $|y|\to \infty$, this identity likewise implies that $u(x,t) \to 0$ as $|x| \to \infty$. Writing $f^t(x):= u(x,t)$, it thus follows that for every $t>0$, $f^t$ satisfies the assumptions of Proposition~\ref{prop.CLargument}; this in particular implies that 
\[
u_{n,f^t}(0,1) \to u_{f^t}(0,1)
\]
as $n \to \infty$. Rewritten using the definitions of $u_{n,f^t}$ and $u_{f^t}$, this convergence states that 
\[
\E{f(S_n n^{-1/2}+t^{1/2}\xi)} \to 
\E{f(\xi + t^{1/2}\xi')}\, 
\]
as $n \to \infty$, where $\xi,\xi'$ are independent and $\mathcal{N}(0,\Sigma)$-distributed (and $\xi$ is independent of $S_n$). Finally, since $f$ is bounded and uniformly continuous, for any $\eps > 0$ we may choose $t>0$ small enough that 
$\E{|f(x+t^{1/2}\xi)-f(x)|} \le \eps$ for all $x \in \R^{d}$, and the triangle inequality then implies that 
\begin{align*}
 \limsup_{n \to \infty}\, & \big|\E{f(S_n n^{-1/2})}-\E{f(\xi)}\big| \\
& \le 2\eps + \limsup_{n \to \infty}
\big|\E{f(S_n n^{-1/2}+t^{1/2}\xi)} -
\E{f(\xi + t^{1/2}\xi')}\big|\\
& = 2\eps
\, .
\end{align*}
Since $\eps > 0$ was arbitrary, this completes the proof.

\subsection*{Acknowledgements}
LAB was partially supported by NSERC Discovery Grant 643473. GB was partially supported by an NSERC Canada Graduate Doctoral Scholarship and NSERC Discovery Grant 247764. EB was partially supported by NSERC Discovery Grants 247764 and 643473. JL was partially supported by NSERC Discovery Grant 247764, FRQNT Grant 250479, and the Canada Research Chairs program.

\bibliographystyle{abbrv} 
\bibliography{bib}
\end{document}